\newtheorem{theorem}{Theorem}[section]
\newtheorem{corollary}[theorem]{Corollary}
\newtheorem{lemma}[theorem]{Lemma}
\newtheorem{lemma and definition}[theorem]{Lemma and Definition}
\newtheorem{proposition}[theorem]{Proposition}
\newtheorem{definition}[theorem]{Definition}
\newtheorem{notation}[theorem]{Notation}
\newtheorem{remark}[theorem]{Remark}
\newtheorem{example}[theorem]{Example}
\newtheorem{discussion}[theorem]{Discussion}
\newtheorem{the construction}[theorem]{THE CONSTRUCTION}
\newcommand{\field}[1]{\mathbb{#1}}
\newcommand{\C}{\field{C}}
\newcommand{\R}{\field{R}}
\newcommand{\Q }{\field{Q}}
\newcommand{\Z }{\field{Z}}
\newcommand{\N }{\field{N}}
\DeclareMathOperator{\htt}{ht} \DeclareMathOperator{\qf}{qf}
\DeclareMathOperator{\Ann}{Ann} \DeclareMathOperator{\td}{t.d.}
\DeclareMathOperator{\Ker}{Ker} \DeclareMathOperator{\Ima}{Im}
\DeclareMathOperator{\Spec}{Spec} \DeclareMathOperator{\Max}{Max}
\DeclareMathOperator{\Cl}{Cl} \DeclareMathOperator{\Pic}{Pic}
\DeclareMathOperator{\Supp}{Supp} \DeclareMathOperator{\rad}{rad}
\DeclareMathOperator{\Sup}{sup} \DeclareMathOperator{\Jac}{J}
\DeclareMathOperator{\Cok}{Cok}\DeclareMathOperator{\depth}{depth}
\renewcommand{\thefootnote}{\fnsymbol{footnote}}
\def\proof{{\parindent0pt {\bf Proof.}}}
\def\endproof{{\hfill $\Box$}}
\def\1{{\rm (1)}}
\def\2{{\rm (2)}}
\def\3{{\rm (3)}}
\def\4{{\rm (4)}}
\def\5{{\rm (5)}}
\def\6{{\rm (6)}}
\def\i{{\rm (i) }}
\def\ii{{\rm (ii) }}
\def\iii{{\rm (iii) }}
\def\iv{{\rm (iv) }}
\def\v{{\rm (v) }}
\begin{document}

\title{FFRT properties of hypersurfaces and their $F$-signatures}

\author{Khaled Alhazmy}
\email{khazmy@kku.edu.sa}
\address{Department of  Mathematics, College of Science,
King Khalid University, Main campus,  Abha 61413, Saudi Arabia}

\author{Mordechai Katzman}
\email{m.katzman@sheffield.ac.uk}
\address{School of  Mathematics,
University of Sheffield, Hicks Building, Sheffield S3 7RH, United Kingdom}

\begin{abstract}
This paper studies properties of certain hypersurfaces  in prime characteristic:
we give a sufficient and necessary conditions for some classes of such hypersurfaces to
have Finite $F$-representation Type (FFRT) and we compute
the $F$-signatures of these hypersurfaces.
The main method used in this paper is based on finding  explicit matrix factorizations.
\end{abstract}

\maketitle

\section{Introduction}
\label{Section: Introduction}

For any commutative ring $R$ of prime characteristic $p$, $R$-module $M$, and $e\geq 0$  we can construct a new $R$-module
$F_*^e M$ with elements $\{ F_*^e m \,|\, m\in M\}$, abelian group structure $(F_*^e m_1) + (F_*^e m_2)=F_*^e (m_1+m_2)$ and $R$-module structure given by
$r F_*^e m = F_*^e r^{p^e} m$. In this paper we study  properties of the modules $F_*^e R$ for various hypersurfaces $R$.

Smith and Van den Bergh introduced in \cite{SV} the first property studied in this paper, namely \emph{Finite F-representation Type} (henceforth abbreviated \emph{FFRT})
which we describe in some detail in Section \ref{Section: Rings of finite F-representation type}.
We say that $R$ has FFRT if there exists a finite set of indecomposable $R$-modules $\mathcal{M}=\{M_1, \dots, M_s\}$ such that for all $e\geqslant 0$,
$F_*^e R$ is isomorphic to a direct sum of summands in $\mathcal{M}$. Rings with FFRT have very good properties, e.g.,
with some additional hypothesis their rings of differential operators are simple \cite[Theorem 4.2.1]{SV},
their Hilbert-Kunz multiplicities are rational (cf.~ \cite{S}),
tight closure commutes with localization in such rings (cf.~\cite{Y}),
local cohomology modules of $R$ have finitely many associated primes (\cite{DaoQuyAssPrimes}), and
$F$-jumping coefficients in $R$ form discrete sets (\cite{MR3211813}).

The main tool in this paper, developed in sections \ref{Section: Matrix Factorization} and \ref{Section: The presentation as a cokernel of a Matrix Factorization},
is an explicit presentation of $F_*^e R$ as cokernel of a matrix factorization.
In section \ref{Section: When does f+uv have finite F-representation type?}
we classify those $F$-finite hypersufaces
of the form

$K[\![x_1, \dots, x_n, u,v]\!]/(f(x_1, \dots, x_n)+uv)$
which have FFRT in terms of the hypersurfaces defined by the powers of $f$. As a corollary we construct in section \ref{Section: A Class of rings that have FFRT but not finite CM type} examples of rings that have FFRT but not finite CM representation type.

The last two sections of this paper analyze the \emph{$F$-signatures} of some hypersurfaces. We recall the following definition.

\begin{definition}
Let   $(R,\mathfrak{m},K)$ be a $d$-dimensional  $F$-finite Noetherian local ring of prime characteristic
$p$. If $[K:K^p]<\infty$  is the dimension of $K$ as $K^p$-vector space and  $\alpha (R) =\log_p[K:K^p]$, then the \emph{$F$-signature of $R$}, denoted by $\mathbb{S}(R)$,  is defined as
\begin{equation}\label{eqn: F-signature}
 \mathbb{S}(R)=\lim_{e\rightarrow \infty} \frac{\sharp (F_*^e(R),R)}{p^{e(d+\alpha (R))}}
\end{equation}
where $\sharp (F_*^e(R),R)$ denotes the maximal rank of a free direct summand of $F_*^e(R)$.
\end{definition}

$F$-signatures were first defined by C.~Huneke and G.~Leuschke in \cite{HL} for
$F$-finite local rings of prime
characteristic with a perfect residue field.
In \cite{Y2} Y.~Yao extended the definition of $F$-signature to arbitrary local rings without the assumptions that the residue field be perfect and
recently K.~Tucker  proved in  \cite{KT} that the limit in (\ref{eqn: F-signature}) always exists.

The $F$-signature  seems to give subtle information on the singularities of  $R$. For example, the $F$-signature of any of the two-dimensional
quotient singularities ($A_n$), ($D_n$), ($E_6$), ($E_7$), ($E_8$) is the reciprocal of the order of the group
$G$ defining the singularity \cite[Example 18]{HL}. In \cite{AL}   I.~Aberbach and G.~Leuschke proved  that the $F$-signature is positive if
and only if $R$ is strongly F-regular. Furthermore, we have $\mathbb{S}(R) \leq 1$ with equality if and only if R is regular (cf.~\cite[Theorem 4.16]{KT} and \cite[Corollary 16]{HL}).

In sections \ref{Section: The F-signature of f+uv when f is a monomial} and \ref{Section: The F-signature of f+z^2 when f is a monomial}
we apply the methods developed in sections \ref{Section: Matrix Factorization} and \ref{Section: The presentation as a cokernel of a Matrix Factorization}
in order to find explicit
expressions for the $F$-signatures of the rings
$K[\![x_1, \dots, x_n,u,v]\!]/(f(x_1, \dots, x_n)+uv)$ and $K[\![x_1, \dots, x_n,z]\!]/(f(x_1, \dots, x_n)+z^2)$ where $f$ is a monomial.

Throughout this paper, we shall assume all rings are commutative with a unit, Noetherian, and have prime characteristic $p > 0$ unless otherwise is stated.
We denote by $\mathbb{N} $ (and $\mathbb{Z}_{+}$) the set of the positive integers (and non-negative integers) respectively. We let $q=p^e$ for some $e \in \mathbb{N}$.

\section{Rings of finite F-representation type}
\label{Section: Rings of finite F-representation type}

Throughout this section  $R$ denotes a ring of prime characteristic $p$. In what follows we gather some properties of the modules $F_*^e(M)$ introduced in the previous section
and introduce several concepts related to FFRT.

Recall that a non-zero finitely generated module $M$ over a local ring $R$ is Cohen-Macaulay if $\depth_RM=\dim M$ and  $R$ is a Cohen-Macaulay ring if $R$ itself is  a Cohen-Macaulay module. However, if $\depth_RM=\dim R$, $M$ is called maximal Cohen-Macaulay module (or MCM module). When $M$ is a finitely generated module over non-local ring $R$, $M$ is Cohen-Macaualy if $M_\mathfrak{m}$ is a Cohen-Macaulay module for all maximal ideals $\mathfrak{m} \in \Supp M$.

Later in the paper we will look at the modules $F_*^e(R)$ when $R$ is a Cohen-Macaulay ring. Note that a regular sequence on an $R$-module $M$ is also a regular sequence
on $F_*^e M$, and in particular, if $R$ is Cohen-Macaulay, $F_*^e(R)$ are MCM modules for all $e\geq 0$.

\begin{definition}\label{D1}

A finitely generated $R$-module $M$ is said to have   finite F-representation type (henceforth abbreviated  FFRT) by finitely generated
$R$-modules $M_1, \dots, M_s$ if for every positive integer $e$, the $R$-module $F_*^e(M)$ can be written as a
finite direct sum in which each direct summand is isomorphic to some module in  $\{M_1,\ldots ,M_s\}$ , that is, there exist
non-negative integers $t_{(e,1)}, \ldots , t_{(e,s)}$ such that
 $$F_*^e(M)= \bigoplus_{j=1}^s M_j^{\oplus t_{(e,j)}} .$$
 We say that $M$ has FFRT if $M$ has FFRT by some finite set of $R$-modules $M_1,\ldots,M_s$
\end{definition}

If $W\subset R$ is a multiplicatively closed set and $I\subseteq R$ is an ideal, then for a finitely generated $R$-module $M$ one can check that $F_*^e(W^{-1}M)$  is isomorphic to
$W^{-1}F_*^e(M)$ as $W^{-1}R$-module and
$F_*^e(\widehat{M}_I)$ is isomorphic to $\widehat{F_*^e(M)}_I$ as $\hat{R}_I$-modules, where $\widehat{}_I$ denotes completion at $I$.

Examples of rings with FFRT can be found in \cite{SV}, \cite{TT} and  \cite{TS}. One such class of examples are \emph{rings of finite CM representation type}.
These are Cohen-Macaulay rings $R$ with the property that the set of all isomorphism classes of finitely generated, indecomposible, MCM $R$-modules is finite.
When $R$ is Cohen-Macaulay any direct summand of $F_*^e(R)$ is a MCM module, hence
finite Cohen-Macaulay representation type implies FFRT.
The converse is not true: we explore some examples of these in section \ref{Section: A Class of rings that have FFRT but not finite CM type}.


\section{Matrix Factorization}
\label{Section: Matrix Factorization}

In this section, we discuss the concept of a \textit{matrix factorization} and many basic properties that we need later in the rest of this paper.
We start by fixing some notations and stating some observations about the matrices and a cokernel of a matrix.

If  $m,n \in \mathbb{N}$, then  $M_{m \times n}(R)$ (and $M_{ n}(R)$)   denotes  the set  of all $m \times n$  (and $n \times n$)  matrices over a ring $R$. If $A \in M_{m\times n}(R)$ is the matrix representing the $R$-linear map  $\phi : R^{n}\longrightarrow R^{m}$  that is given by $ \phi (X)= AX$ for all $ X \in R^{ ^n}$ , then we write $A:R^{n}\longrightarrow R^{m}$ to denote the $R$-linear map $\phi$ and $\Cok_R(A)$ denotes the cokernel of $\phi$ while   $ \Ima_R(A)$ denotes the image of $\phi $. We  omit the subscript $R$ if it  is known from the context. If $A,B \in M_n(R)$, we say that $A$ is equivalent to $B$ if there exist invertible matrices $U,V \in M_n(R)$ such that $A =UBV$. If $A, B \in M_n(R)$ are equivalent matrices, then $\Cok_R(A)$ is isomorphic to  $\Cok_R(B)$ as $R$-modules. Furthermore, if $A \in M_n(R)$ and $B \in M_m(R)$, then  we define $A\oplus B$ to be  the matrix in $M_{m+n}(R)$ that is given by $A\oplus B = \left[
                                                                                                                     \begin{array}{cc}
                                                                                                                       A & 0 \\
                                                                                                                       0  & B \\
                                                                                                                     \end{array}
                                                                                                                   \right]$. In this case,  $ \Cok_R(A\oplus B)=\Cok_R(A)\oplus \Cok_R(B)$.

Let $\mathfrak{P}$ denote a ring with identity that is not necessarily commutative.
Let $m$ and $ n$ be  positive integers. If $\lambda \in \mathfrak{P}$, $ 1 \leq i \leq m $ and $ 1 \leq j \leq n  $, then $L_{i,j}^{m \times n}(\lambda)$ (and $L_{i,j}^{ n}(\lambda)$) denotes the $m \times n$ (and $n\times n$) matrix whose $(i,j)$ entry is $\lambda$ and the rest are all zeros. When $i\neq j$, we write $E_{i,j}^n(\lambda):= I_n+L_{i,j}^n(\lambda)$ where $I_n$ is the identity matrix in $M_n(R)$. If there is no ambiguity, we write $E_{i,j}(\lambda)$ (and $L_{i,j}(\lambda)$)  instead of $E_{i,j}^n(\lambda)$ (and $L_{i,j}^{ n}(\lambda)$).

The following lemmas describe an equivalence between specific matrices that are basic for our work in the rest of this paper.

\begin{lemma}\label{L2.9}
Let $m$ be an integer with $m \geq 2$  and $n=2m$. If $A$ is a matrix in $M_n(\mathfrak{P})$ that is given by
\begin{equation*}
  A= \left[
       \begin{array}{ccccccc}
         b &   &  &   &   &  & x  \\
         0 & b &  &   &   &   & \\
         1 & 0 & b &   &   &   &  \\
           & 1 & 0  & b &   &   &   \\
           &   & \ddots & \ddots & \ddots &   &  \\
           &   &   &  1 & 0 & b &   \\
           &   &   &   &  1 & 0 & b \\
       \end{array}
     \right]
\end{equation*}
then there exist two invertible matrices $M,N \in M_n(\mathfrak{P})$  satisfying that

 \begin{equation*}
   MAN=\left[
       \begin{array}{ccccccc}
         0 &   &  &   &   & (-1)^{m-1}b^m & x  \\
         0 & 0 &  &   &   &   &(-1)^{m-1}b^m \\
         1 & 0 & 0 &   &   &   &  \\
           & 1 & 0  & 0 &   &   &   \\
           &   & \ddots & \ddots & \ddots &   &  \\
           &   &   &  1 & 0 & 0 &   \\
           &   &   &   &  1 & 0 & 0 \\
       \end{array}
     \right]
 \end{equation*}

\end{lemma}

\begin{proof}
Use row and column operations  and the  induction on $m$.
\end{proof}

\begin{corollary}\label{C2.14}
Let $n=2m+1$ where $m$ is an integer with $m \geq 2$ and let $A$ be a matrix in $M_n(\mathfrak{P})$ given by
\begin{equation*}
  A= \left[
       \begin{array}{ccccccc}
         b &   &  &   &   & x & 0  \\
         0 & b &  &   &   &   & y \\
         1 & 0 & b &   &   &   &  \\
           & 1 & 0  & b &   &   &   \\
           &   & \ddots & \ddots & \ddots &   &  \\
           &   &   &  1 & 0 & b &   \\
           &   &   &   &  1 & 0 & b \\
       \end{array}
     \right]
\end{equation*}
Then there exist  invertible matrices $M$ and $N$ in $M_n(\mathfrak{P})$ such that
\begin{equation*}
 MAN =\left[
       \begin{array}{ccccccc}
         0 &   &  &   &   &  x &  (-1)^{m}b^{\frac{n+1}{2}} \\
         0 & 0 &  &   &   & (-1)^{m-1}b^{\frac{n-1}{2}}  & y \\
         1 & 0 & 0 &   &   &   &  \\
           & 1 & 0  & 0 &   &   &   \\
           &   & \ddots & \ddots & \ddots &   &  \\
           &   &   &  1 & 0 & 0 &   \\
           &   &   &   &  1 & 0 & 0 \\
       \end{array}
     \right]
\end{equation*}
\end{corollary}

\begin{proof}
Let $ \tilde{A}$ be the $2m \times 2m $ matrix given by

\begin{equation*}
\tilde{A}=\left[
       \begin{array}{ccccccc}
         b &   &  &   &   &  & x  \\
         0 & b &  &   &   &   & \\
         1 & 0 & b &   &   &   &  \\
           & 1 & 0  & b &   &   &   \\
           &   & \ddots & \ddots & \ddots &   &  \\
           &   &   &  1 & 0 & b &   \\
           &   &   &   &  1 & 0 & b \\
       \end{array}
     \right]
\end{equation*}
It follows that

 \begin{equation*}
   A= \left[
        \begin{array}{ccccc|c}
            &   &   &   &   & 0 \\
            &   &   &   &   & y \\
            &   &   \tilde{A} &   &   & 0 \\
            &   &   &   &   & \vdots \\
            &   &   &   &   & 0 \\ \hline
          0 & \ldots & 0 & 1 & 0 & b \\
        \end{array}
      \right]
 \end{equation*}

Now use Lemma  \ref{L2.9}  ,and  appropriate row and column operations to get the result.
\end{proof}

\begin{lemma}\label{L.16}
Let $n$ be an integer   with $n \geq 2 $. If $ A \in M_n(\mathfrak{P})$ is given by
 $$ A = \begin{bmatrix} b & & & & &  \\
                        1 & b & & & & \\
                          & 1 & b & & & \\
                          & & \ddots & \ddots & & \\
                         & & &1& b   \end{bmatrix}  $$
                         there exist upper triangular matrices $B, C \in M_n(\mathfrak{P})$ such that the $(i,i)$ entry of $B$ and $C$ is the identity element of $\mathfrak{P}$ for all $i=1, \dots, n $ and
                        \[ BAC = \left[\begin{array}{ccccc} 0 & & & &  (-1)^{n+1}b^n \\
                        1 & 0 & & &  \\
                          & 1 &0 & &  \\
                         &  & \ddots &\ddots & \\
                         & & &1& 0 \end{array} \right] \]
\end{lemma}
\begin{proof}
Use row and column operations and the  induction on $n \geq 2$ to prove this lemma.
\end{proof}







\begin{corollary}\label{L.20}
Let $n $  be a positive integer such that $ n \geq 3 $ ,  $ 1 \leq k \leq n-1 $ and let $m= n-k$. Suppose that $u$ and $ v$ are two variables on $\mathfrak{P}$ and let $A_1^{(k)} \in M_k(\mathfrak{P})$ and $A_2^{(k)} \in M_m(\mathfrak{P})$ be given by

$A_1^{(k)} = \begin{bmatrix} b & & & & &  \\
                        1 & b & & & & \\
                          & 1 & b & & & \\
                          & & \ddots & \ddots & & \\
                         & & &1& b   \end{bmatrix}  $ and $A_2^{(k)} = \begin{bmatrix} b & & & & &  \\
                        1 & b & & & & \\
                          & 1 & b & & & \\
                          & & \ddots & \ddots & & \\
                         & & &1& b   \end{bmatrix}  $  .

 If $B_k = \left[ \begin{array}{c|c} A_1^{(k)} & L_{1,m}^{k \times m}(v) \\ \hline  L_{1,k
 }^{m \times k}(u) & A_2^{(k)} \end{array}\right]  =
 \left[ \begin{array}{cccc|cccc} b & & & & & & &  v  \\
                                    1 & b & & & & & &   \\
                                      &  \ddots & \ddots &  & & & &  \\
                                      & & 1& b & & & &  \\ \hline
                                      & & &u &    b & & &  \\
                                      & & & &  1 & b & &  \\
                                      & & & &   &  \ddots & \ddots &  \\
                                      & & & &   & &1 & b
                                        \end{array}\right]  $,    then $ B_k$ is equivalent to the matrix $C_k= I_{n-2} \oplus \left[
                                                                                                                                  \begin{array}{cc}
                                                                                                                                    (-1)^{k+1}b^k & v \\
                                                                                                                                    u & (-1)^{m+1}b^m \\
                                                                                                                                  \end{array}
                                                                                                                                \right]\in M_n(\mathfrak{P})$ where $I_{n-2}$ is the identity matrix in $M_{n-2}(\mathfrak{P})$.

Moreover, if $D \in M_n(\mathfrak{P})$ is given by $ D = \begin{bmatrix} b & & &  &uv  \\
                        1 & b & & & \\
                          & 1 & b &  & \\
                          & & \ddots & \ddots &  \\
                        & & &1& b   \end{bmatrix}$ , then $D$ is equivalent to the matrix $\tilde{D}= I_{n-2}\oplus \left[
                                                                                                                      \begin{array}{cc}
                                                                                                                        (-1)^{n}b^{n-1} & uv \\
                                                                                                                        1 & b \\
                                                                                                                      \end{array}
                                                                                                                    \right]\in M_n(\mathfrak{P})$  where $I_{n-2}$ is the identity matrix in $M_{n-2}(\mathfrak{P})$.
\end{corollary}

\begin{proof}
By Lemma \ref{L.16}, there exist upper triangular matrices $B_1, C_1 \in M_k(\mathfrak{P})$ and $B_2, C_2 \in M_{n-k}(\mathfrak{P})$ with $1$ along their diagonal such that
$$ B_1 A_1^{(k)}C_1 = \left[ \begin{array}{cccc} 0 & & & (-1)^{k+1}b^k \\
                     1 & 0 & &  \\
                      & \ddots & \ddots &  \\
                      & &1 & 0 \end{array} \right] ,
 B_2 A_2^{(k)}C_2 = \left[ \begin{array}{cccc} 0 & & & (-1)^{m+1}b^m \\
                     1 & 0 & &  \\
                      & \ddots & \ddots &  \\
                      & &1 & 0 \end{array} \right] $$
where $m=n-k$.
Define $B,C \in M_n(\mathfrak{P})$ to be
$B = \begin{bmatrix} B_1 & 0 \\ 0 & B_2 \end{bmatrix} $ and $C = \begin{bmatrix} C_1 & 0 \\ 0 & C_2 \end{bmatrix} $.
Using   appropriate row and column operations  on the matrix $  B B_kC $ yields the required result. Applying a similar argument on the matrix $D$ proves the corresponding result.
\end{proof}

If $A$ and $B$ are matrices in $M_n(R)$ such that $\Cok_R(A)$ is isomorphic to  $\Cok_R(B)$, this does not imply  in general that $A$ is equivalent to $B$ \cite{LR}. However, if $R$ is a semiperfect ring (in particular, if $R$ is a commutative noetherian  local ring) and  $\Cok_R(A)$ is isomorphic to  $\Cok_R(B)$ as $R$-modules, then $A$ is equivalent to $B$ \cite[Theorem 4.3]{LR}.

Matrix factorizations were introduced by David Eisenbud in \cite{ED}  as a means of compactly describing
the minimal free resolutions of  maximal Cohen-Macaulay modules that have no free direct summands  over a local hypersurface
ring.
\begin{definition}\cite[Definition 1.2.1]{DI}
Let $f$ be a non-zero element of a commutative ring $S$.

 A matrix factorization of $f$ is a pair $(\phi,\psi)$ of homomorphisms
between finitely generated free $S$-modules  $ \phi: G \rightarrow F$  and $\psi: F \rightarrow G $, such
that
$\psi\phi = f I_G$ and $\phi\psi = f I_F$ .
\end{definition}

\begin{proposition}\label{C3.2}
Let $f$ be a non-zero element of a commutative ring $S$. If $( \phi: G \rightarrow F,\psi: F \rightarrow G )$ is a matrix factorization of $f$, then:
\begin{enumerate}
\item[(a)] $f\Cok( \phi )= f \Cok( \psi )=0$ .
\item[(b)] If $f$ is a  non-zerodivisor, then $\phi$ and $\psi$ are injective.
\item[(c)] If $S$ is a domain, then $G$ and $F$ are finitely generated free modules having the same rank.
\end{enumerate}
\end{proposition}
\begin{proof}
It is easy to show (a) and (b) but (c)can be proved using the same argument as in \cite[Page 127]{CMR}.

\end{proof}

As a result, we can define the matrix factorization of a non-zero element $f$ in a domain as the following.

\begin{definition}\label{D23}
Let $S$ be a domain and let $f\in S$ be a non-zero element. A matrix factorization is a pair $(\phi,\psi)$  of $n \times n$  matrices with
entries in $S$ such that $\psi\phi = \phi\psi = fI_n$ where $I_n$ is the identity matrix in $M_n(S)$. By $\Cok_S(\phi,\psi)$ and $\Cok_S(\psi,\phi)$, we mean $\Cok_S(\phi)$ and $\Cok_S(\psi)$ respectively. There are two distinguished trivial matrix factorizations of any element
$f$, namely $( f ,1)$ and $(1, f )$. Note that $\Cok_S(1, f ) = 0$, while $\Cok_S(f ,1)=S/fS$. Two matrix factorizations $(\phi,\psi)$ and $(\alpha,\beta)$  of $f$ are said to be equivalent and we write $(\phi,\psi)\sim(\alpha,\beta)$ if $\phi,\psi,\alpha,\beta \in M_n(S)$ for some positive integer $n$ and there exist invertible matrices $V,W \in M_n(S)$ such that $V\phi = \alpha W$ and $ W \psi = \beta V$. Furthermore, we define $(\phi,\psi)\oplus(\alpha,\beta)$ to be the matrix  $(\phi \oplus \alpha, \psi \oplus \beta)$ which is a matrix factorization of $f$. If $(S,\mathfrak{m})$ is a local domain,    a matrix factorization $(\phi,\psi)$ of an element   $f \in \mathfrak{m}\setminus\{0\}$ is reduced if all entries of $\phi$ and $\psi$ are in $\mathfrak{m}$.
\end{definition}

A matrix factorization can be  decomposed as follows.
 \begin{proposition}\label{P.24}\cite[Page58]{YY}
Let $(S,\mathfrak{m})$ be a regular local ring and  $f \in \mathfrak{m} \smallsetminus  \{0\}$.
Any  matrix factorization $(\phi,\psi)$ of $f$ can be written uniquely up to equivalence  as
\begin{equation*}\label{E4}
  (\phi,\psi) = (\alpha,\beta)\oplus (f,1)^t\oplus (1,f)^r
\end{equation*}
with $(\alpha,\beta)$ reduced, i.e, all entries of $\alpha,\beta$ are in $\mathfrak{m}$, and with $t, r$ non-negative integers.
\end{proposition}

A non-zero $R$-module $M$ is decomposable provided there exist non-zero R-modules $M_1, M_2$ such that $M = M_1 \oplus M_2$; otherwise $M$ is indecomposable.  If  $M$ is an $R$-module that does not have
a direct summand isomorphic to $R$, we say that $M$  is stable $R$-module.\\

D.  Eisenbud has established a relationship between  reduced matrix factorizations and stable  MCM modules as follows.
\begin{proposition}\label{A}\cite[ Corollary 7.6]{YY} \cite[Theorem 8.7]{CMR}
Let $(S,\mathfrak{m})$ be a regular local ring and let $f$ be a non-zero
element of $\mathfrak{m}$ and $R=S/fS$ . Then
the association $ (\phi, \psi)\mapsto \Cok(\phi, \psi)$   yields a bijective correspondence between the
set of equivalence classes of reduced matrix factorizations of f and the set of isomorphism
classes of stable MCM modules over R .
\end{proposition}
Based on Proposition \ref{A} and \cite[Corollary 3.7]{NW}, one can deduce the following remark.
\begin{remark}\label{C3.14}
Let $(S,\mathfrak{m})$ be a regular local ring, $f \in \mathfrak{m} \smallsetminus \{0\}$, and $R=S/fS$. If $(\phi, \psi)$  is a  reduced matrix factorization  of $f$, then $$(\phi, \psi)\sim [(\phi_1,\psi_1)\oplus (\phi_2,\psi_2)\oplus \dots\oplus(\phi_n,\psi_n)]$$ where $(\phi_i,\psi_i)$ is a reduced matrix factorization of $f$ and $\Cok_S(\phi_i,\psi_i)$ is non-free indecomposable MCM $R$-module for all $ 1 \leq i \leq n$. Furthermore,  the above representation of $(\phi, \psi)$  is unique up to equivalence when $S$ is complete.
\end{remark}
If $(S,\mathfrak{m})$  is a regular local ring, $f \in \mathfrak{m} \setminus \{0\}$ and    $u,v,z$ are variables,   a matrix factorization of $f+uv$  in $S[\![u,v]\!]$ and a matrix factorization of $f+z^2$ in $S[\![z]\!]$ can be obtained from a matrix factorization of $f$ as follows:
\begin{remark}\label{R3.8}
Let $(S,\mathfrak{m})$  be a regular local ring,   $f \in \mathfrak{m} \setminus \{0\}$, $R=S/fS$ and let $ u$ and $v$ be two variables.
Suppose that $(\phi,\psi)$ and $(\alpha,\beta)$ are two $n \times n$ matrix factorizations of $f$. Then
\begin{enumerate}

\item[(1)]  We define  $ (\phi, \psi)^{\maltese}$ to be $ ( \begin{bmatrix}  \phi & -vI \\  uI & \psi \end{bmatrix} , \begin{bmatrix}  \psi & vI \\  -uI & \phi \end{bmatrix})$ which  is a matrix factorization of  $f+uv$ in $S[\![u,v]\!]$ and we can see the following observations:
\begin{enumerate}

\item[(a)]  If $(\phi,\psi)\sim(\alpha,\beta)$, then  $(\phi,\psi)^{\maltese}\sim(\alpha,\beta)^{\maltese}$.
\item[(b)]  $[(\phi, \psi)\oplus(\alpha,\beta)]^{\maltese}$ is equivalent to $ (\phi, \psi)^{\maltese}\oplus(\alpha,\beta)^{\maltese}$.
\item[(c)]If $M$ is a stable MCM $R$-module, then $M=\Cok_S(\phi,\psi)$ where $(\phi,\psi)$ is a reduced matrix factorization of $f$ and hence we can  define $M^{\maltese}=\Cok_{S[\![u,v]\!]}(\phi,\psi)^{\maltese}$.
 As a result, if $M_i=\Cok_S(\phi_i,\psi_i)$ where
$(\phi_i,\psi_i)$ is a reduced  matrix factorization of $f$ for all $1 \leq i \leq n$, it follows that $( \bigoplus\limits_{j=1}^{n}M_j)^{\maltese} =\bigoplus \limits_{j=1}^{n}M_j^{\maltese}$.
\item[(d)]  If $R^{\bigstar}=S[\![u,v]\!]/(f+uv)$, then   $\Cok_{S[\![u,v]\!]}(f, 1)^{\maltese}=R^{\bigstar}= \Cok_{S[\![u,v]\!]}(1, f)^{\maltese}$  and hence we can write $(R)^{\maltese}= R^{\bigstar}$ as $R=\Cok_S(f,1)$.
\end{enumerate}

\end{enumerate}
\begin{enumerate}

\item[(2)]   We define $ (\phi, \psi)^{\sharp}$ to be $( \begin{bmatrix}  \phi & -zI \\  zI & \psi \end{bmatrix} ,
\begin{bmatrix}  \psi & zI \\  -zI & \phi \end{bmatrix})$ which is   a matrix factorization of $f+z^2$ in $S[\![z]\!]$ and we can see the following observations:
\begin{enumerate}
\item[(e)] If $(\phi,\psi)\sim(\alpha,\beta)$, then  $(\phi,\psi)^{\sharp}\sim(\alpha,\beta)^{\sharp}$.
\item[(f)] $ [(\phi, \psi)\oplus(\alpha,\beta)]^{\sharp}$ is equivalent to $ (\phi, \psi)^{\sharp}\oplus(\alpha,\beta)^{\sharp}$.
\item[(g)]  If $R^{\sharp}=S[\![z]\!]/(f+z^2)$, then $R^{\sharp}=S[\![z]\!]/(f+z^2)= \Cok_{S[\![z]\!]}(f, 1)^{\sharp} = \Cok_{S[\![z]\!]}(1,f)^{\sharp}$.
\end{enumerate}
\end{enumerate}
\end{remark}

 If $R$ is as  in the above Remark, the indecomposable non-free MCM modules over $R$ and $R^{\bigstar}$ can be  related in the following situation.

\begin{proposition}\label{P28}  \cite[Theorem 8.30]{CMR}
Let $(S, \mathfrak{m}, K)$ be a complete regular local ring such that $K$ is algebraically closed of characteristic not $2$ and $f\in \mathfrak{m}^2 \smallsetminus \{0\}$. If $R=S/fS$ and $R^{\bigstar}:=S[\![u,v]\!]/(f+uv)$, then the association $M\rightarrow M^{\maltese}$ defines a bijection between the isomorphisms classes of indecomposable non-free MCM modules over $R$ and $R^{\bigstar}$.
\end{proposition}
The following proposition is a direct consequence  of Proposition \ref{P28}, Remark \ref{R3.8}, and Remark \ref{C3.14}.
\begin{proposition}\label{P3.15}
Let $(S, \mathfrak{m}, K)$ be a complete regular local ring such that $K$ is algebraically closed of characteristic not $2$, and   $f\in \mathfrak{m}^2 \smallsetminus \{0\}$. let $R=S/fS$ and  $R^{\bigstar}:=S[\![u,v]\!]/(f+uv)$. If $(\phi, \psi)$ and $(\alpha, \beta)$ are reduced matrix factorizations of $f$, then $\Cok_S(\phi, \psi)$ is isomorphic to $\Cok_S(\alpha, \beta)$ over $R$ if and only if $\Cok_{S[\![u,v]\!]}(\phi, \psi)^{\maltese}$ is isomorphic to $\Cok_{S[\![u,v]\!]}(\alpha,\beta)^{\maltese}$  over $R^{\bigstar}.$
\end{proposition}

If $M$ is an $R$-module,  $ \sharp ( M,R)$ denotes the maximal rank of a free direct summands of $M$.
One can use Proposition \ref{A} and  Remark \ref{R3.8} to show the following corollary.
\begin{corollary}\label{C3.8}
Let $(S,\mathfrak{m})$ be a regular local ring ,  $f \in\mathfrak{ m} \smallsetminus \{0\}$,  $R=S/fS$, $R^{\bigstar}=S[\![u,v]\!]/(f+uv)$ , and $R^{\sharp}=S[\![z]\!]/(f+z^2)$ where $u,v$ and $z$ are  variables over $S$ .
If  $(\phi,\psi)$ is a matrix factorization of $f$ having the decomposition
\begin{equation*}\label{E4}
  (\phi,\psi) = (\alpha,\beta)\oplus (f,1)^t \oplus (1,f)^r
\end{equation*}
such that  $(\alpha,\beta)$ is  reduced and $t, r$ are non-negative integers, then :
\begin{enumerate}
\item[(a)] $\sharp (\Cok_S(\phi,\psi),R)=t$ and $\sharp (\Cok_S(\psi,\phi),R)=r$.
\item[(b)] $\sharp (\Cok_{S[\![u,v]\!]}(\phi,\psi)^{\maltese},R^{\bigstar})=\sharp (\Cok_S(\phi,\psi),R)+\sharp (\Cok_S(\psi,\phi),R)$.
\item[(c)] $\sharp (\Cok_{S[\![z]\!]}(\phi,\psi)^{\sharp},R^{\sharp})=\sharp (\Cok_S(\phi,\psi),R)+\sharp (\Cok_S(\psi,\phi),R)$.
\end{enumerate}
\end{corollary}





\section{The presentation of  $F_*^e(S/fS)$ as a cokernel of a Matrix Factorization of $f$}
\label{Section: The presentation as a cokernel of a Matrix Factorization}

Throughout the rest of this paper, unless otherwise mentioned, we will adopt the following notation:
\begin{notation}\label{N4.1}
$K$ will denote a  field  of prime characteristic $p$ with $[K:K^p] < \infty $, and we set $q=p^e$ for some $e \geq 1$. $S$ will denote the ring  $K[x_1,\dots,x_n]$ or ($K[\![x_1,\dots,x_n]\!]$). Let $\Lambda_e$ be  a basis of $K$ as $K^{p^e}$-vector space.   We set $$ \Delta_e := \{\lambda x_1^{a_1}\dots x_n^{a_n}\,|\, 0 \leq a_i \leq p^e-1 \text{ for all } 1\leq i \leq n \text{ and } \lambda \in \Lambda_e \} $$
and set  $r_e := | \Delta_e| = [K:K^p]^eq^n. $
\end{notation}
\begin{discussion}\label{D4.2}
It is straightforward to see that $ \{ F_*^e(j)\, |\, j \in \Delta_e \} $ is a basis of $F_*^e(S)$ as free $S$-module. Let  $f \in S$. If $S \xrightarrow{f} S $ is the $S$-linear map given by $s \longmapsto fs$, let $F_*^e(S) \xrightarrow{F_*^e(f)} F_*^e(S) $ be the $S$-linear map that is given by $F_*^e(s) \longmapsto F_*^e(fs)$ for all $s\in S$. We write $M_S(f,e)$ (or $M(f,e)$ if $S$ is known) to denote the  $r_e \times r_e$  matrix representing the $S$-linear map $F_*^e(S) \xrightarrow{F_*^e(f)} F_*^e(S) $ with respect to the basis $ \{ F_*^e(j)\, |\, j \in \Delta_e \} $. Indeed, if $j \in \Delta_e $, there exists a unique  set  $\{f_{(i,j)} \in S \,|\, i \in \Delta_e \} $ such that $F_*^e(jf)= \bigoplus_{i\in\Delta_e}f_{(i,j)}F^e(i)$ and consequently $M_S(f,e)=[f_{(i,j)}]_{(i,j) \in \Delta^2_e}$.   The   matrix $M_S(f,e)$   is called the matrix of relations of $f$ over $S$ with respect to $e$.
\end{discussion}
\begin{example}
Let $K$ be a perfect field of prime characteristic 3 , $S=K[x,y]$ or $S=k[\![x,y]\!]$ and  let $ f =x^2 + xy$.  We aim to construct  $M_S(f,1)$. Let    $\{ F_*^1(1) , F_*^1(x) , F_*^1(x^2) , F_*^1(y) , F_*^1(yx)
, F_*^1(yx^2),F_*^1(y^2) , \\ F_*^1(y^2x) , F_*^1(y^2x^2) \}$ be the ordered basis of $F_*^1(S)$ as  $S$-module. Therefore, $M_S(f,1)$ is the $9 \times 9$ matrix whose $j$-th column consists  of the coordinates of  $F_*^e(jf)$  with respect to the given basis where $F_*^e(j)$ is the $j$-th element of the above basis.  For example, the $8$-th column of $M_S(f,1)$ is obtained from $ F_*^1(y^2xf) = F_*^1(y^2x^3 + x^2y^3) = xF_*^1(y^2) + yF_*^1(x^2) $. \\
As a result, it follows that
$$ M_S(f,1)= \begin{bmatrix}  0 & x & 0 & 0 & 0 & 0 & 0 & 0 & yx \\ 0 & 0 & x & 0 & 0 & 0 & y & 0 & 0 \\ 1 & 0 & 0 & 0 & 0 & 0 & 0& y & 0 \\ 0 & 0 & x & 0 & x & 0 & 0 & 0 & 0 \\ 1 & 0 & 0 & 0 & 0 & x & 0 & 0 & 0 \\ 0 & 1 & 0 & 1 & 0 & 0 & 0 & 0 & 0 \\ 0 & 0 & 0 & 0 & 0 & x & 0 & x & 0 \\ 0 & 0 & 0 & 1 & 0 & 0 & 0 & 0 & x \\ 0 & 0 & 0 & 0 & 1 & 0 & 1 & 0 & 0 \end{bmatrix}. $$
\end{example}

\begin{remark}\label{Ex4.3}
If $m \in \mathbb{N}$, then $F_*^e(f^{mq}j)=f^mF_*^e(j)$ for all $j \in \Delta_e$. This makes $M_S(f^{mq},e)=f^mI$ where $I$ is the identity matrix of size $r_e \times r_e$.
\end{remark}

\begin{proposition}\label{P4.3}

If $f,g  \in S$, then
\begin{enumerate}
\item[(a)] $ M_S(f+g,e)=M_S(f,e)+M_S(g,e)$,
\item[(b)] $ M_S(fg,e)=M_S(g,e)M_S(f,e)$ and consequently $ M_S(f,e)M_S(g,e)=M_S(g,e)M_S(f,e)$, and
\item[(c)] $  M_S(f^m,e)= [M_S(f,e)]^m$ for all $m \geq 1$.
\end{enumerate}
\end{proposition}
\begin{proof}
The proof follows immediately from Discussion \ref{D4.2}.



\end{proof}

According to  Remark \ref{Ex4.3} and Proposition \ref{P4.3}, we get that $$M_S(f^k,e)M_S(f^{q-k},e)=M_S(f^{q-k},e)M_S(f^k,e)= M_S(f^q,e)= fI_{q^n}$$ for all $0 \leq k \leq q$. This shows the following result.

\begin{proposition}\label{P4.5}
For every $f \in S$ and $0 \leq k \leq q$, the pair  $ ( M_S(f^k,e),M_S(f^{q-k},e))$ is a matrix factorization of $f$.

\end{proposition}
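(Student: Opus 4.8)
The plan is to read the statement off directly from the two facts established immediately before it, namely Remark \ref{Ex4.3} and Proposition \ref{P4.3}. Write $\phi=M_S(f^k,e)$ and $\psi=M_S(f^{q-k},e)$; by construction (Discussion \ref{D4.2}) these are $r_e\times r_e$ matrices with entries in $S$, and $S$ is a domain, so the only thing left to check in order to apply Definition \ref{D23} is the pair of identities $\psi\phi=\phi\psi=fI_{r_e}$.

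First I would invoke the multiplicativity in Proposition \ref{P4.3}(b): since $f^k\cdot f^{q-k}=f^q$, we obtain $M_S(f^q,e)=M_S(f^k f^{q-k},e)=M_S(f^{q-k},e)\,M_S(f^k,e)=\psi\phi$, and the commutativity clause of the same part gives $\psi\phi=\phi\psi$. Next I would apply Remark \ref{Ex4.3} with $m=1$, which states $M_S(f^q,e)=fI_{r_e}$. Combining these two observations yields $\psi\phi=\phi\psi=fI_{r_e}$, which is exactly the condition defining a matrix factorization of $f$ over $S$ in Definition \ref{D23}.

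There is essentially no obstacle: the proposition is a bookkeeping consequence of results already in hand, and the sole point worth a sentence is that the argument also covers the degenerate values $k=0$ and $k=q$, where $M_S(f^0,e)=M_S(1,e)=I_{r_e}$ so that the pair specializes to the trivial matrix factorizations $(1,f)$ and $(f,1)$ of Definition \ref{D23}; in particular no case distinction is required.
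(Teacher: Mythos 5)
Your proposal is correct and is essentially identical to the paper's argument, which likewise combines Proposition \ref{P4.3}(b) with Remark \ref{Ex4.3} (for $m=1$) to get $M_S(f^k,e)M_S(f^{q-k},e)=M_S(f^{q-k},e)M_S(f^k,e)=M_S(f^q,e)=fI$. The remark about the degenerate cases $k=0$ and $k=q$ giving the trivial factorizations is a harmless addition.
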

\begin{discussion}

Let  $x_{n+1}$ be a new variable  and let $ L=S[x_{n+1}]$  if $S=K[x_1,\ldots,x_n]$ or ($ L=S[\![x_{n+1}]\!]$  if $S=K[\![x_1,\ldots,x_n]\!]$). We aim to describe $M_L(g,e)$ for some $g \in L$ by describing the columns of $M_L(g,e)$. First we will construct a basis  of the free $L$-module  $F_*^e(L)$   using  the basis $ \{ F_*^e(j) | j \in\Delta_e \} $  of the free $S$-module $F_*^e(S)$.  For each $0\leq v \leq q-1 $, let $ \mathfrak{B}_v= \{F_*^{e}(j x_{n+1}^{v})\, |\, j \in\Delta_e  \}$  and set  $ \mathfrak{B} = \mathfrak{B}_0 \cup \mathfrak{B}_1 \cup \mathfrak{B}_2\cup \dots \cup \mathfrak{B}_{q-1} $.
Therefore $\mathfrak{B}$  is a basis for $F_*^e(L)$ as free $L$-module and if   $g \in L$,  we write
 \begin{eqnarray*}F_*^e(g)& = & \bigoplus\limits_{i\in\Delta_e}g_{i}^{(0)}F^{e}(i) \oplus\bigoplus\limits_{i\in\Delta_e}g_{i}^{(1)}F^{e}(ix_{n+1}^1)\oplus\dots \oplus \bigoplus\limits_{i\in\Delta_e}g_{i}^{(q-1)}F^{e}(ix_{n+1}^{q-1})
\end{eqnarray*}
where $\{ g_{i}^{(s)} \in L\, |\,  0 \leq s \leq q-1 \text{ and } i\in\Delta_e \}$.
For each $ 0 \leq s \leq q-1 $ let   $ [F_*^e(g)]_{\mathfrak{B}_s} $ denote the column whose entries are the coordinates $ \{g_{i}^{(s)}\,|\, i\in\Delta_e\} $ of $F_*^e(g)$ with respect to $\mathfrak{B}_s$. Let $[F_*^e(g)]_\mathfrak{B}$ be the $r_eq \times 1$  column that is composed of the columns $[F_*^e(g)]_{\mathfrak{B}_0},\dots,[F_*^e(g)]_{\mathfrak{B}_{q-1}}$ respectively.
Therefore  $M_L(g,e)$ is the  $r_eq \times r_eq$ matrix over $L$ whose columns are all the columns  $ [F_*^e(jx_{n+1}^s g)]_\mathfrak{B}$ where $ 0 \leq s \leq q-1$ and $ j \in \Delta_e$. This means that
 $ M_L(g,e) = \left[\begin{array}{ccc}
                 C_{0} & \ldots & C_{q-1} \\
               \end{array}
             \right]$  where $C_{m}$ is the  $r_eq \times r_e$ matrix over $L$ whose columns are the columns $ [F_*^e(jx_{n+1}^m g)]_\mathfrak{B}$  for all $j \in \Delta_e$. If we define $C_{(k,m)}$ to be the  $r_e \times r_e$ matrix over $L$ whose columns are $ [F_*^e(jx_{n+1}^m g)]_{\mathfrak{B}_k}$ for all $j\in\Delta_e$, then $C_{m}$ consists of $ C_{(0,m)},\dots,C_{(q-1,m)} $ respectively
and hence the matrix
 $M_L(g,e)$ is given by :
\begin{equation}
  M_L(g,e) = \left[
               \begin{array}{ccc}
                 C_{0} & \ldots & C_{q-1} \\
               \end{array}
             \right]=\left[
               \begin{array}{ccc}
                 C_{(0,0)} & \ldots & C_{(0,q-1)} \\
                 \vdots&   & \vdots \\
                 C_{(q-1,0)} & \ldots & C_{(q-1,q-1)} \\
               \end{array}
             \right].
\end{equation}

\end{discussion}
Using the above discussion we can prove the following lemma
\begin{lemma}\label{L4.7}
Let $f \in S$ with $A = M_S(f,e)$ and let $ L=S[x_{n+1}]$  if $S=K[x_1,\ldots,x_n]$  or ($ L=S[\![x_{n+1}]\!]$  if $S=K[\![x_1,\ldots x_n]\!]$). If $0\leq d \leq q-1 $, then
\begin{equation}
  M_L(fx_{n+1}^d,e) = \left[
               \begin{array}{ccc}
                 C_{(0,0)} & \ldots & C_{(0,q-1)} \\
                 \vdots&   & \vdots \\
                 C_{(q-1,0)} & \ldots & C_{(q-1,q-1)} \\
               \end{array}
             \right]
\end{equation}
where
\begin{equation*}
    C_{(k,m)} = \begin{cases}
              A             &\text{if   } (k,m) \in \{ (d,0),(d+1,1),\ldots ,(q-1,q-1-d)\} \\
             x_{n+1}A                &\text{if   } (k,m) \in \{ (0,q-d),(1,q-1-d),\ldots ,(d,q-1)\}\\
              0               & \text{otherwise}
           \end{cases}
\end{equation*}

\end{lemma}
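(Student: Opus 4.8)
The plan is to read off the columns of $M_L(fx_{n+1}^d,e)$ from the block description established in the discussion preceding the lemma, and then sort the nonzero blocks according to which power of $x_{n+1}$ they carry. Recall that the column of $M_L(fx_{n+1}^d,e)$ attached to the basis vector $F_*^e(jx_{n+1}^m)$ (with $j\in\Delta_e$, $0\le m\le q-1$) is $[F_*^e(jx_{n+1}^m\cdot fx_{n+1}^d)]_{\mathfrak B}=[F_*^e((jf)x_{n+1}^{m+d})]_{\mathfrak B}$, and that by definition the block $C_{(k,m)}$ is the $r_e\times r_e$ matrix whose columns are the $\mathfrak B_k$-coordinates of these columns as $j$ ranges over $\Delta_e$. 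So the whole computation reduces to expanding $F_*^e\big((jf)x_{n+1}^{a}\big)$ in the basis $\mathfrak B$ for $a=m+d$, and since $0\le m,d\le q-1$ we always have $0\le a\le 2q-2$.

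First I would record a twisting identity. Writing $A=M_S(f,e)=[f_{(i,j)}]$, Discussion~\ref{D4.2} gives $F_*^e(jf)=\bigoplus_{i\in\Delta_e}f_{(i,j)}F_*^e(i)$, equivalently $jf=\sum_i f_{(i,j)}^{\,q}\,i$ in $S$. Multiplying this relation by $x_{n+1}^{a}$ and applying $F_*^e$, which is additive and obeys the defining relation $rF_*^e(s)=F_*^e(r^{q}s)$ (compare Remark~\ref{Ex4.3}), yields for every $a\ge 0$
\[
  F_*^e\big((jf)x_{n+1}^{a}\big)=\bigoplus_{i\in\Delta_e} f_{(i,j)}\,F_*^e\big(i\,x_{n+1}^{a}\big).
\]
If $0\le a\le q-1$, then $F_*^e(i\,x_{n+1}^{a})\in\mathfrak B_{a}$ for every $i$, so this column sits entirely in the row-block indexed by $\mathfrak B_a$, and its $\mathfrak B_a$-coordinate vector is exactly the $j$-th column of $A$; assembling over all $j\in\Delta_e$ recovers the block $A$. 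If $q\le a\le 2q-2$, write $a=q+b$ with $0\le b\le q-2$ and use $F_*^e(i\,x_{n+1}^{q+b})=F_*^e\big(x_{n+1}^{q}\cdot i\,x_{n+1}^{b}\big)=x_{n+1}F_*^e(i\,x_{n+1}^{b})\in x_{n+1}\mathfrak B_{b}$; hence the column sits in the row-block indexed by $\mathfrak B_b$ and the corresponding block is $x_{n+1}A$.

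Finally I would carry out the bookkeeping with $a=m+d$. If $m+d\le q-1$, i.e.\ $0\le m\le q-1-d$, then $C_{(m+d,\,m)}=A$ and $C_{(k,m)}=0$ for $k\ne m+d$; letting $m$ run over $0,\dots,q-1-d$ produces exactly the positions $(d,0),(d+1,1),\dots,(q-1,q-1-d)$ carrying $A$. If $m+d\ge q$, i.e.\ $q-d\le m\le q-1$, then with $b=m+d-q$ we get $C_{(m+d-q,\,m)}=x_{n+1}A$ and $C_{(k,m)}=0$ otherwise; letting $m$ run over $q-d,\dots,q-1$ produces the listed positions carrying $x_{n+1}A$. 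All remaining blocks vanish, which is the claim. I do not expect a genuine obstacle: the only delicate points are that extracting $x_{n+1}^{q}$ from an exponent $\ge q$ contributes a single factor $x_{n+1}$ (not $x_{n+1}^{q}$) under the Frobenius twist, and the off-by-one indexing in the two lists; the reduction from the two-variable matrix to the one-variable matrix $A$ is forced by the displayed identity.
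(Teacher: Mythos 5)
Your proposal is correct and follows essentially the same route as the paper's own proof: expand $F_*^e(jx_{n+1}^m\cdot fx_{n+1}^d)$ via the relation $F_*^e(jf)=\bigoplus_i f_{(i,j)}F_*^e(i)$, split according to whether $d+m\le q-1$ or $d+m\ge q$, and extract a single factor $x_{n+1}$ from $x_{n+1}^{q}$ under the Frobenius twist. The only remark worth making is that your bookkeeping correctly yields the $x_{n+1}A$ blocks in positions $(m+d-q,m)$ for $q-d\le m\le q-1$, i.e.\ ending at $(d-1,q-1)$, which exposes a small off-by-one typo in the lemma's displayed list rather than any gap in your argument.
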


\begin{proof}
If $A = M_S(f,e)=[f_{(i,j)}]$, for each $j \in \Delta_e$ we can write  $F_*^e(jf)= \bigoplus_{i\in \Delta_e}f_{(i,j)}F^e(i)$. If $g=fx_{n+1}^d$, for every $1 \leq m \leq q-1 $ and   $j \in \Delta_e$, it follows that $F_*^e(jx_{n+1}^mg) = \bigoplus_{i\in \Delta_e}f_{(i,j)}F^e(ix_{n+1}^{d+m})$. Therefore,

\begin{equation*}
    F_*^e(jx_{n+1}^mg) = \begin{cases}
             \bigoplus_{i\in \Delta_e}f_{(i,j)}F^e(ix_{n+1}^{d+m})   &\text{if   } d+m \leq q-1 \\
           \bigoplus_{i\in \Delta_e}x_{n+1}f_{(i,j)}F^e(ix_{n+1}^{d+m -q })             &\text{if   } d+m > q-1
           \end{cases}
\end{equation*}
 Accordingly, if $m \leq q-1-d $, then



 \begin{equation*}
    C_{(k,m)} = \begin{cases}
             A             &\text{ if } k=d+m \\
              0            &\text{ if } k \neq d+m
           \end{cases}
\end{equation*}
  However,  if $m > q-1-d $, it follows that


 \begin{equation*}
    C_{(k,m)} = \begin{cases}
              x_{n+1}A              &\text{if } k=d+m -q \\
              0                     &\text{if } k \neq d+m -q
           \end{cases}
\end{equation*}

  This shows the required result.

\end{proof}

\begin{proposition}\label{EP}
Let $ L=S[x_{n+1}]$ (if $S=K[x_1,\ldots,x_n]$)or $ L=S[\![x_{n+1}]\!]$ (if $S=K[\![x_1,\ldots,x_n]\!]$) . Suppose that $g \in L$ is given by $$ g = g_0 + g_1x_{n+1} + g_2x_{n+1}^2 + \dots +g_dx_{n+1}^d$$ where $d < q$ and $g_k \in S$ for all $ 0 \leq k \leq d $ . If $A_k=M_S(g_k,e)$ for each $0 \leq k \leq d $ then  \[ M_L(g,e) = \begin{bmatrix}
A_0& &  & & x_{n+1}A_d&x_{n+1}A_{d-1} &\ldots &x_{n+1}A_1  \\
A_1&A_0&  & & &x_{n+1}A_d &   &\vdots \\
 & & & & & &\ddots & \\
 & & & & & & &x_{n+1}A_d \\
\vdots & \vdots & \ddots & \ddots& & & & \\
A_d&\vdots&  & & & & & \\
  & A_d & & & & & &\\
  &  &  & & & & & \\
  &  &  &A_d &A_{d-1} &A_{d-2}& \ldots &A_0

\end{bmatrix}\]

\end{proposition}
\begin{proof}
By Proposition [\ref{P4.3}], we get
$$ M_L(g,e) = M_L(g_0,e) + M_L(g_1x_{n+1},e) + M_L(g_2x_{n+1}^2,e) + \dots +M_L(g_dx_{n+1}^d,e)$$

Applying Lemma \ref{L4.7} for $M_L(g_jx_{n+1}^j,e)$ for all $0\leq j \leq n$ yields the result.




\end{proof}

\begin{example}
Let $K$ be a perfect field of prime characteristic 3 and let $S=K[x]$ or $S=K[\![x]\!]$ . Assume $ L=S[y]$ (if $S=K[x]$) or $L=S[\![y]\!]$ (if $S=K[\![x]\!]$ ).  Let $ f =x^2 + xy$, $f_0=x^2$, and $f_1=x$.

By Proposition \ref{EP} it follows that

$$ M_L(f,1)=\begin{bmatrix} M_S(f_0,1) & & yM_S(f_1,1) \\ M_S(f_1,1) & M_S(f_0,1) &  \\ & M_S(f_1,1) & M_S(f_0,1)  \end{bmatrix} = \left[\begin{array}{ccc|ccc|ccc}  0 & x & 0 & 0 & 0 & 0 & 0 & 0 & yx \\ 0 & 0 & x & 0 & 0 & 0 & y & 0 & 0 \\ 1 & 0 & 0 & 0 & 0 & 0 & 0& y & 0 \\ \hline 0 & 0 & x & 0 & x & 0 & 0 & 0 & 0 \\ 1 & 0 & 0 & 0 & 0 & x & 0 & 0 & 0 \\ 0 & 1 & 0 & 1 & 0 & 0 & 0 & 0 & 0 \\ \hline 0 & 0 & 0 & 0 & 0 & x & 0 & x & 0 \\ 0 & 0 & 0 & 1 & 0 & 0 & 0 & 0 & x \\ 0 & 0 & 0 & 0 & 1 & 0 & 1 & 0 & 0 \end{array} \right] $$
\end{example}



\begin{proposition}\label{P4.11}
Let $f \in S$ be a non-zero non-unit element. If $R=S/fS$, then
$F_*^e(R)$ is a maximal Cohen-Macaulay $R$-module isomorphic to $\Cok_S(M_S(f,e))$ as $S$-modules (and as $R$-modules).
\end{proposition}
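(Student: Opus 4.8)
The plan is to identify the module $F_*^e(R)$ with a cokernel over $S$ and then invoke the matrix of relations machinery developed in Discussion \ref{D4.2}. First I would observe that since $R$ is Cohen-Macaulay (it is a hypersurface, hence $S/fS$ with $S$ regular) and a regular sequence on $R$ remains a regular sequence on $F_*^e(R)$, the module $F_*^e(R)$ is a maximal Cohen-Macaulay $R$-module; this was already recorded in Section \ref{Section: Rings of finite F-representation type}, so it only needs to be cited. The substance is the cokernel presentation.

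For the presentation, I would start from the free presentation of $R$ as an $S$-module, namely $S \xrightarrow{\,f\,} S \to R \to 0$, and apply the exact functor $F_*^e(-)$ — exactness here is just that $F_*^e$ is the restriction of scalars along the Frobenius, so it preserves all exactness — to obtain $F_*^e(S) \xrightarrow{F_*^e(f)} F_*^e(S) \to F_*^e(R) \to 0$ as a sequence of $S$-modules. Now $F_*^e(S)$ is free over $S$ of rank $r_e$ with the basis $\{F_*^e(j) \mid j \in \Delta_e\}$ fixed in Notation \ref{N4.1}, and by the very definition of $M_S(f,e)$ in Discussion \ref{D4.2} the map $F_*^e(f)$ is represented by the matrix $M_S(f,e)$ in this basis. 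Hence $F_*^e(R) \cong \Cok_S(M_S(f,e))$ as $S$-modules. To upgrade this to an isomorphism of $R$-modules, I would note that both sides are naturally $R = S/fS$-modules (the left because $f$ annihilates $F_*^e(R)$ since $f \cdot F_*^e(r) = F_*^e(f^q r)$ lies in the image... more directly $F_*^e(R)$ is an $R$-module as it is $F_*^e$ of an $R$-module, and the right by Proposition \ref{C3.2}(a) applied to the matrix factorization $(M_S(f,e), M_S(f^{q-1}\cdot\text{?}))$ — actually simplest: $f$ kills $\Cok_S(M_S(f,e))$ because $M_S(f^q,e) = fI$ factors through $M_S(f,e)$ by Remark \ref{Ex4.3} and Proposition \ref{P4.3}(b)) and the $S$-module isomorphism just constructed is visibly $S$-linear, hence $R$-linear once both are viewed as $R$-modules.

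The main obstacle, such as it is, is the bookkeeping that makes the identification of $F_*^e(f)$ with the matrix $M_S(f,e)$ legitimate: one must be careful that $M_S(f,e)$ as defined records the coordinates of $F_*^e(jf)$ with respect to the chosen basis, which is exactly the matrix of the $S$-linear map $F_*^e(f)\colon F_*^e(S) \to F_*^e(S)$, $F_*^e(s) \mapsto F_*^e(fs)$, and that this is the map appearing after applying $F_*^e$ to multiplication by $f$ on $S$. This is immediate from Discussion \ref{D4.2} but is the one place where the conventions must be lined up. Everything else — exactness of $F_*^e$, freeness of $F_*^e(S)$, and the MCM claim — is either standard or already in the excerpt, so the proof is short: establish the MCM property by citing Section \ref{Section: Rings of finite F-representation type}, then apply $F_*^e$ to $S \xrightarrow{f} S \to R \to 0$ and read off the matrix from Discussion \ref{D4.2}.
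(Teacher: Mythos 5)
Your proposal is correct and follows essentially the same route as the paper: both establish the MCM claim from $R$ being Cohen--Macaulay, produce the exact sequence $F_*^e(S)\xrightarrow{F_*^e(f)}F_*^e(S)\to F_*^e(R)\to 0$ (the paper builds the maps $\psi$ and $\phi$ by hand rather than invoking exactness of restriction of scalars, but this is the same sequence), identify $F_*^e(f)$ with $M_S(f,e)$ via Discussion \ref{D4.2}, and pass from $S$-linearity to $R$-linearity by noting that $f$ annihilates both sides, exactly as in the paper's appeal to Propositions \ref{P4.5} and \ref{C3.2}(a).
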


\begin{proof}

The first assertion follows from the fact that $R$ is Cohen Macaulay.
Write $I=f S$.  Since $ \{ F_*^e(j) \, | \, j \in \Delta_e \} $ is a basis of $F_*^e(S)$ as free $S$-module,
the module $F_*^e(R)$ is generated as $S$-module by the set $ \{ F_*^e(j + I) \,| \,j \in \Delta_e \} $. For every $g \in S$, define
$ \phi( F_*^e(g)) = F_*^e(g + I)$. It is clear that $\phi: F_*^e(S)  \longrightarrow F_*^e(R)$ is a surjective homomorphism of $S$-modules whose kernel is the $S$-module
$F_*^e(I)$ that is generated by the set $ \{ F_*^e(jf) \, | \, j \in \Delta_e \} $.
Now, define the $S$-linear map  $ \psi : F_*^e(S) \rightarrow F_*^e(S)$  by $ \psi( F_*^e(h)) = F_*^e(hf)$ for all $h\in S$.
We have an exact sequence $F_*^e(S)\xrightarrow{\psi}F_*^e(S)\xrightarrow{\phi}F_*^e(R) \xrightarrow{} 0$. Notice for each $j \in \Delta_e$  that $\psi(F_*^e(j))= F_*^e(jf)= \bigoplus_{i\in\Delta_e}f_{(i,j)}F^e(i)$
and hence $M_S(f,e)$
represents the map $\psi$ on the given free-bases.
By  Proposition \ref{P4.5} and  Proposition \ref{C3.2}(a), it follows  that  $F_*^e(R)$ is isomorphic to $\Cok_S(M_S(f,e))$ as $R$ -modules.\\

\end{proof}
\begin{corollary}\label{C4.12}
Let $f\in S$ be a non-zero non-unit element. If $1 \leq k \leq q-1$ and $R=S/fS$, then
\begin{enumerate}
\item[(a)] $F_*^e(S/f^kS)$ is a maximal Cohen-Macaulay $R$-module isomorphic to $Cok_S(M_S(f^k,e))$ as $S$-modules (and as $R$-modules), and
\item[(b)]  $F_*^e(S/f^kS)$ is a maximal Cohen-Macaulay $S/f^kS$ -modules isomorphic to  $Cok_S(M_S(f^k,e))$ as $S$-modules (and as $S/f^kS$-modules).
\end{enumerate}
\end{corollary}
\begin{proof}
(a) Since  the pair $(M_S(f^k,e),M_S(f^{q-k},e))$ is a matrix factorization of $f$ (Proposition  \ref{P4.5}), it follows that $f Cok_S(M_S(f^k,e))=0$. It is clear that $fF_*^e(S/f^kS)=0$ .
This makes $F_*^e(S/f^kS)$ and  $Cok_S(M_S(f^k,e))$ $R$-modules and consequently  $F_*^e(S/f^kS)$ is isomorphic to $Cok_S(M_S(f^k,e))$ as $R$ -modules. \\
The result (b) can be proved by  observing  that  $(M_S(f^k,e),M_S(f^{kq-k},e))$ is a matrix factorization of $f^k$ and applying the same argument as above.
\end{proof}

\begin{lemma}\label{L4.13}
Let $K$ be a  field of prime characteristic $p>2 $ with $[K:K^p] <\infty$  and let $T=S[z]$ if $S=K[x_1,\dots,x_n]$ ( or $T=S[\![z]\!]$ if $S=K[\![x_1,\dots,x_n]\!]$ ). If $A=M_S(f,e)$ for some $f \in S$,   then
\begin{equation*}
 F_*^e(T/(f+z^2))= \Cok_{T}\left[
                                              \begin{array}{cc}
                                                A^{\frac{q-1}{2}} & -zI \\
                                                zI &  A^{\frac{q+1}{2}} \\
                                              \end{array}
                                            \right]
\end{equation*}
\end{lemma}

\begin{proof}
Let $I$ be the identity matrix in $M_{r_e}(S)$  where $r_e=[K:K^p]^ep^{en}$. It follows by Proposition \ref{EP} that $ M_{T}(f+z^2, e)$ is a $q \times q$ matrix over the ring $M_{r_e}(S)$ that is given by

\begin{equation*}
  M_{T}(f+z^2, e) = \left[
       \begin{array}{ccccccc}
         A &   &  &   &   & zI & 0  \\
         0 & A &  &   &   &   & zI \\
         I & 0 & A &   &   &   &  \\
           & I & 0  & A &   &   &   \\
           &   & \ddots & \ddots & \ddots &   &  \\
           &   &   &  I & 0 & A &   \\
           &   &   &   &  I & 0 & A \\
       \end{array}
     \right]
\end{equation*}
By Corollary \ref{C2.14}, we get $ F_*^e(T/(f+z^2)) = \Cok_{T}\left[
                                              \begin{array}{cc}
                                                A^{\frac{q-1}{2}} & -zI \\
                                                zI &  A^{\frac{q+1}{2}} \\
                                              \end{array}
                                            \right] $
\end{proof}




\begin{proposition}\label{P21}
Let $u$ and $v$ be new variables on $S$ and let $L=S[u,v]$ if $S=K[x_1,\dots,x_n]$ ( or $L=S[\![u,v]\!]$  if $S=K[\![x_1,\dots,x_n]\!]$ ). Let  $R^{\bigstar}=L/(f+uv)$. If $A$ is the matrix $M_S(f,e)$ for some $f\in S$ and $I$ is the identity matrix in the ring $M_{r_e}(S)$, where $r_e=[K:K^p]^ep^{en}$,  then $$ F_*^e(L/(f+uv))= \Cok_F(M_L(f+uv,e))= (R^{\bigstar})^{r_e} \bigoplus \bigoplus_{j=1}^{q-1}\Cok_LB_j $$ where  $ B_k = \begin{bmatrix}  A^k & -vI \\ uI & A^{q-k}  \end{bmatrix}$ for all $ 1\leq k \leq q-1$.
\end{proposition}

\begin{proof}
Recall  that $\mathfrak{D}=\{ F_*^e(ju^sv^t) \,|\, j\in \Delta_e , 0 \leq s,t \leq q-1 \}$ is a free basis of $F_*^e(L)$ as $L$-module.
We introduce a $\mathbb{Z}/q\mathbb{Z}$-grading on both $L$ and $F_*^e(L)$ as follows:
$L$ is concentrated in degree 0, while $\deg(F_*^e(x_i))=0$ for each $1 \leq i \leq n$, $\deg(F_*^e(u))=1$ and $\deg(F_*^e(v))=-1$.
We can now write
$F_*^e(L)= \bigoplus_{k=0}^{q-1}M_k$ where $M_k$ is the free $L$-submodule of $F_*^e(L)$
of elements of homogeneous degree $k$, and which is generated by
$$\mathfrak{D}_k = \{F_*^e(ju^sv^t)\in \mathfrak{D}\,|\, \deg(F_*^e(ju^sv^t))=k \}.$$
Note that
$\mathfrak{D}_0= \{F_*^e(ju^sv^s)\,|\, j\in\Delta_e , 0 \leq s \leq q-1 \} $,  and that for all $1 \leq k \leq q-1$
\begin{eqnarray*}
  \mathfrak{D}_k&=& \{F_*^e(ju^{k+r}v^{r})\,|\, j\in\Delta_e , 0 \leq r \leq q-k-1 \} \cup\\
    & &  \{F_*^e(ju^{r}v^{q-k+r})\,|\, j\in\Delta_e ,  0 \leq r \leq k-1 \}.
\end{eqnarray*}
Let $J$ be the ideal $(f+uv)L$. Since $\deg(F_*^e(f+uv))=0$, it follows that $F_*^e(J)= \bigoplus_{k=0}^{q-1}M_kF_*^e(f+uv)$ and consequently
\begin{equation}\label{EEEE1}
  F_*^e(L/(f+uv))= F_*^e(L)/F_*^e(J)=\bigoplus_{k=0}^{q-1}M_k/M_kF_*^e(f+uv).
\end{equation}

We now show that
$M_k/M_kF_*^e(f+uv) \cong \Cok_LC_k  $  where $C_0= \begin{bmatrix} (-1)^{q}A^{q-1} & uvI \\ I & A \end{bmatrix} $ and $ C_k = \begin{bmatrix} (-1)^{q-k+1}A^{q-k} & vI \\ uI & (-1)^{k+1}A^{k} \end{bmatrix}$ for all $1\leq k \leq q-1$.
Recall that if $M_s(f,e)=[f_{(i,j)}]$, then $F_*^e(j)=\bigoplus_{i\in\Delta_e}f_{(i,j)}F_*^e(i)$ for all $j \in \Delta_e$.
So now
 \begin{eqnarray}\label{Eq18}
  \nonumber  F_*^e(ju^sv^t(f+uv)) &=& F_*^e(jf)F_*^e(u^sv^t) + F_*^e(ju^{s+1}v^{t+1}) \\
     &=& \bigoplus_{i\in\Delta_e}f_{(i,j)}F_*^e(iu^sv^t) \oplus F_*^e(ju^{s+1}v^{t+1}).
 \end{eqnarray}

Since $\deg(F_*^e(iu^sv^t))=\deg(F_*^e(ju^{s+1}v^{t+1}))$ for all $i,j  \in \Delta_e$ and all $0 \leq s,t \leq q-1$, it follows that $ F_*^e(ju^sv^t(f+uv)) \in M_k$ for all $F_*^e(ju^sv^t) \in \mathfrak{D}_k$. This enables us to define the homomorphism  $\psi_k :M_k\rightarrow M_k$ that is  given by $\psi_k(F_*^e(ju^sv^t))=F_*^e(ju^sv^t(f+uv))$ for all $F_*^e(ju^sv^t) \in \mathfrak{D}_k$ and consequently we have the following short exact sequence   $$M_k\xrightarrow{\psi_k}M_k\xrightarrow{\phi_k}M_k/M_kF_*^e(f+uv) \xrightarrow{} 0$$
where $\phi_k :M_k\rightarrow M_k/M_kF_*^e(f+uv)$ is the canonical surjection.
Notice that if $0 \leq  s < q-1$, equation (\ref{Eq18}) implies that
\begin{equation}\label{Eq19}
 F_*^e(ju^sv^s(f+uv))=  \bigoplus_{i\in\Delta_e}f_{(i,j)}F_*^e(iu^sv^s) \oplus F^e(ju^{s+1}v^{s+1})
\end{equation} and

 \begin{equation}\label{Eq20}
 F_*^e( ju^{q-1}v^{q-1}(f+uv))=  \bigoplus_{i\in\Delta_e}f_{(i,j)}F_*^e(iu^{q-1}v^{q-1}) \oplus uv F_*^e(j) ,
\end{equation}
therefore $\psi_0$ is represented by the matrix $\begin{bmatrix} A & & &uvI \\ I & A & & \\ & \ddots & \ddots & \\ & & I & A\end{bmatrix}$ which is a $q\times q$ matrix over the ring $M_{r_e}(L).$
Now Corollary \ref{L.20} implies that
\begin{equation}\label{EEEE2}
 M_0/M_0F_*^e(f+uv)\cong \Cok_L \begin{bmatrix} (-1)^{q}A^{q-1} & uvI \\ I & A \end{bmatrix}=(R^{\bigstar})^{r_e}
\end{equation}
Now let $1 \leq k \leq q-1$. If $0 \leq r < q-k-1$, then  it follows from equation (\ref{Eq18}) that
\begin{equation}\label{Eq21}
 F_*^e( ju^{k+r}v^r(f+uv))=  \bigoplus_{i\in\Delta_e}f_{(i,j)}F_*^e(iu^{k+r}v^{r}) \oplus F_*^e(ju^{k+r+1}v^{r+1})
\end{equation}
and
\begin{equation}\label{Eq22}
 F_*^e( ju^{q-1}v^{q-k-1}(f+uv))= \bigoplus_{i\in\Delta_e}f_{(i,j)}F_*^e(iu^{q-1}v^{q-k-1}) \oplus u F_*^e(jv^{q-k}).
\end{equation}
However, if $0 \leq r < k-1$, it follows from (\ref{Eq18}) that
\begin{equation}\label{Eq22}
 F_*^e( ju^{r}v^{q-k-r}(f+uv))= \bigoplus_{i\in\Delta_e}f_{(i,j)}F_*^e(iu^{r}v^{q-k-r}) \oplus  F_*^e(ju^{r+1}v^{q-k-r+1})
\end{equation}
and
\begin{equation}\label{Eq23}
 F_*^e( ju^{k-1}v^{q-1}(f+uv))=\bigoplus_{i\in\Delta_e}f_{(i,j)}F_*^e(iu^{k-1}v^{q-1}) \oplus vF_*^e(ju^{k}).
\end{equation}

As a result, $\psi_k$  is represented by the matrix  $ \left[ \begin{array}{cccc|cccc} A & & & & & & &  vI  \\
                                    I & A & & & & & &   \\
                                      &  \ddots & \ddots &  & & & &  \\
                                      & & I& A & & & &  \\ \hline
                                      & & &uI &    A & & &  \\
                                      & & & &  I & A & &  \\
                                      & & & &   &  \ddots & \ddots &  \\
                                      & & & &   & &I & A
                                        \end{array}\right] $ which is a $q\times q$ matrix over the ring $M_{r_e}(L)$  where   $uI$ is in the

                                         $(q-k+1,q-k)$ spot of this matrix.

Therefore, Corollary \ref{L.20} implies that
\begin{eqnarray}
 \nonumber M_k/M_kF_*^e(f+uv) &\cong & \Cok_L\begin{bmatrix} (-1)^{q-k+1}A^{q-k} & vI \\ uI & (-1)^{k+1} A^{k} \end{bmatrix}.
\end{eqnarray}
If $k$ is odd integer, we notice that

\begin{eqnarray}\label{EEq}
 \nonumber  M_k/M_kF_*^e(f+uv)  &\cong & \Cok_L( \left[
                                                                                              \begin{array}{cc}
                                                                                                -I & 0 \\
                                                                                                0 & I \\
                                                                                              \end{array}
                                                                                            \right] \begin{bmatrix} (-1)^{q-k+1}A^{q-k} & vI \\ uI & (-1)^{k+1} A^{k} \end{bmatrix}) \\
   &\cong & \Cok_L \begin{bmatrix} A^{q-k} & -vI \\ uI &  A^{k} \end{bmatrix}.
\end{eqnarray}

Similar argument when  $k$ is even shows that
\begin{equation}\label{Eq24}
  M_k/M_kF_*^e(f+uv)  \cong \Cok_L \begin{bmatrix} A^{q-k} & -vI \\ uI &  A^{k} \end{bmatrix}.
\end{equation}

Now the proposition follows from (\ref{EEEE1}), (\ref{EEEE2}), (\ref{EEq}) and (\ref{Eq24}).
\end{proof}

We shall need the following corollary later
\begin{corollary}\label{C4.19}
Let  $S=K[\![x_1,\dots,x_n]\!]$ and let $\mathfrak{m}$ be the maximal ideal of $S$.  Let  $R=S/fS$ where $f \in \mathfrak{m} \setminus \{0\}$. Let  $R^{\bigstar}= S[\![u,v]\!]/(f+uv)$. If $A=M_S(f,e)$, then
\begin{equation*}
  \sharp ( F_*^e(R^{\bigstar}),R^{\bigstar})= r_e + 2 \sum_{k=1}^{q-1} \sharp ( \Cok_S(A^k),R).
 \end{equation*}

\end{corollary}

\begin{proof}

By Proposition \ref{P21}, it follows that
\begin{equation*}
  F_*^e(R^{\bigstar})=  (R^{\bigstar})^{r_e} \bigoplus \bigoplus_{j=1}^{q-1} \Cok_{S[\![u,v]\!]} \begin{bmatrix}  A^k & -vI \\ uI & A^{q-k}  \end{bmatrix}.
\end{equation*}

However, For each $1 \leq k \leq q-1 $ , we  notice that

\begin{equation*}
  \Cok_{S[\![u,v]\!]}\begin{bmatrix}  A^k & -vI \\ uI & A^{q-k}  \end{bmatrix}=\Cok_{S[\![u,v]\!]}(A^k,A^{q-k})^{\maltese}.
\end{equation*}

Therefore, by Corollary \ref{C3.8} and the convention  that $\Cok_{S}(A^k,A^{q-k})=\Cok_{S}(A^k)$ it follows that

\begin{eqnarray*}
   \sharp ( F_*^e(R^{\bigstar}),R^{\bigstar}) &=& r_e +  \sum_{k=1}^{q-1}\sharp (\Cok_{S[[u,v]]}(A^k,A^{q-k})^{\maltese}, R^{\bigstar}) \\
    &=&  r_e +  \sum_{k=1}^{q-1} \left[\sharp(\Cok_{S}(A^k,A^{q-k}),R)+ \sharp(\Cok_{S}(A^{q-k},A^k), R)\right] \\
    &=&  r_e + 2 \sum_{k=1}^{q-1} \sharp (\Cok_{S}(A^k,A^{q-k}),R)\\
    &=&  r_e + 2 \sum_{k=1}^{q-1} \sharp (\Cok_{S}(A^k),R).
\end{eqnarray*}
\end{proof}


\section{When does $S[\![u,v]\!]/(f+uv)$ have finite F-representation type?}
\label{Section: When does f+uv have finite F-representation type?}

We  keep the same notation as in section \ref{Section: The presentation as a cokernel of a Matrix Factorization} unless otherwise stated. The purpose of this section is to provide a characterization of when the ring $S[\![u,v]\!]/(f+uv)$ has finite F-representation type. This characterization enables us of exhibiting a class of rings in section \ref{Section: A Class of rings that have FFRT but not finite CM type}  that have FFRT but not finite CM type.

\begin{proposition}\label{P30}
Let $K$ be an algebraically closed field of prime characteristic $p > 2$ and $q=p^e$. Let  $S:= K[\![x_1,\dots,x_{n}]\!]$ and let $ \mathfrak{m}$ be the maximal ideal of $S$ and $f \in \mathfrak{m}^2 \setminus \{0\}$. Let $R=S/(f)$ and $R^{\bigstar}=S[\![u,v]\!]/(f+uv)$. Then $R^{\bigstar}=S[\![u,v]\!]/(f+uv)$ has FFRT over $R^{\bigstar}$ if and only if there exist indecomposable  $R$-modules $N_1,\dots,N_t$ such that $F_*^e(S/(f^k))$  is  a direct sum with direct summands taken from $ N_1,\dots,N_t $ for every $e\in \mathbb{N}$ and  $1 \leq k < p^e $ .
\end{proposition}
\begin{proof}
First, suppose that $R^{\bigstar}=S[\![u,v]\!]/(f+uv)$ has FFRT over $R^{\bigstar}$ by $\{R^{\bigstar} , M_1, \dots , M_t , \}$ where $M_j$ is an indecomposable non-free MCM $R^{\bigstar}$-module
  By Proposition \ref{P28} and \ref{A},  it follows  that $M_j= \Cok_{S[\![u,v]\!]}(\alpha_j,\beta_j)^{\maltese}$ where $(\alpha_j,\beta_j)$ is a reduced matrix factorization of $f$ such that $\Cok_S(\alpha_j,\beta_j)$ is non-free indecomposable MCM $R$-module. Now apply Proposition \ref{P21} to get that
 \begin{equation}\label{E13}
  F_*^e(S[\![u,v]\!]/(f+uv)) \cong( R^{\bigstar})^{r_e} \oplus \bigoplus \limits_{k=1}^{q-1}\Cok_{S[\![u,v]\!]}(A^k,A^{q-k})^{\maltese }
 \end{equation}
 where $A=M_S(f,e)$ and $A^k= (M_S(f,e))^k=M_S(f^k,e)$ (by Proposition \ref{P4.3}).
  By Proposition \ref{P.24} there exist  a reduced matrix factorization $(\phi_k , \psi_k)$ of $f$  and non-negative integers $t_k$ and $r_k$ such that $(A^k,A^{q-k})\sim (\phi_k , \psi_k) \oplus (f,1)^{t_k} \oplus (1,f)^{r_k}$. This gives  by Remark \ref{R3.8} (b), (c), and (d)  that  $\Cok_{S[[u,v]]}(A^k,A^{q-k})^{\maltese }= \Cok_{S[[u,v]]}(\phi_k , \psi_k)^{\maltese} \oplus [R^{\bigstar}]^{t_k+r_k} $.
  By Krull-Remak-Schmidt theorem (KRS)\cite[Corollary 1.10]{CMR},  there exist  non-negative integers $n_{(e,k,1)},\dots., n_{(e,k,t)}$ such that $\Cok_{S[\![u,v]\!]}(\phi_k , \psi_k)^{\maltese } \cong
   \bigoplus\limits_{j=1}^{t}M_j^{n_{(e,k,j)}}\cong\bigoplus\limits_{j=1}^{t}[\Cok_{S[\![u,v]\!]}(\alpha_j,\beta_j)^{\maltese}]^{ \oplus n_{(e,k,j)}}$\\$ \cong \Cok_{S[\![u,v]\!]}[\bigoplus_{j=1}^{t}(\alpha_j,\beta_j)^{ \oplus n_{(e,k,j)}}]^{\maltese}$. Now, from Proposition \ref{P3.15} and Proposition  \ref{A}  it follows that $ \Cok_{S}(\phi_k , \psi_k) \cong \Cok_{S}[\bigoplus_{j=1}^{t}(\alpha_j,\beta_j)^{ \oplus n_{(e,k,j)}}] \cong \bigoplus_{j=1}^{t} N_j^{ \oplus n_{(e,k,j)}}$ where $N_j= \Cok_S(\alpha_j,\beta_j)$ for all $j \in \{1,\dots,t\}$ . Therefore, $ F_*^e(S/f^kS) \cong \Cok_S(A^k,A^{q-k})=\Cok_S[(\phi_k , \psi_k) \oplus (f,1)^{r_k} \oplus (1,f)^{t_k}]=R^{r_k}\oplus\bigoplus_{j=1}^{t}N_j^{ \oplus n_{(e,k,j)}}$. This shows that $F_*^e(S/(f^k))$, for every $e\in \mathbb{N}$ and  $1 \leq k<p^e $, is  a direct sum with direct summands taken from $\{ R , N_1,\dots,N_t\}$.



Now suppose $F_*^e(S/(f^k))$  is  a direct sum with direct summands taken from indecomposable $R$-modules  $ N_1,\dots,N_t $ for every $e\in \mathbb{N}$ and  $1 \leq k<p^e $. Therefore, for each $1 \leq k \leq q-1$,  there exist  non-negative integers $n_{(e,k)},n_{(e,k,1)},\dots., n_{(e,k,t)}$ such that
\begin{equation}\label{E31}
 \Cok_S(A^k,A^{q-k}) \cong F_*^e(S/f^kS)\cong R^{\oplus n_{(e,k)}} \oplus \bigoplus\limits_{j=1}^{t}N_j^{\oplus n_{(e,k,j)}}  \text{ over }  R.
\end{equation}
Since $F_*^e(S/f^kS)$ is a MCM $R$-module  (by Corollary \ref{C4.12}), it follows that   $N_j$ is an indecomposable non-free MCM $R$-module for each $j \in \{1,\dots,t\}$ and hence
 by Proposition \ref{A} $N_j= \Cok_S(\alpha_j,\beta_j)$ for some reduced matrix factorization $(\alpha_j,\beta_j)$ for  all $j $. If  $M_j = \Cok_{S[\![u,v]\!]}(\alpha_j,\beta_j)^{\maltese}$, it follows that
 $ \Cok_{S[\![u,v]\!]}(A^k,A^{q-k})^{\maltese}= (R^{\bigstar})^{n_{(e,k)}} \oplus \bigoplus\limits_{j=1}^{t}M_j^{n_{(e,k,j)}} $ and hence by  (\ref{E13}) $R^{\bigstar}=S[\![u,v]\!]/(f+uv)$ has FFRT by $\{ R^{\bigstar},M_1,\dots, M_t\}$.

\end{proof}

The following result is a direct application of the above proposition
\begin{corollary}
Let $K$ be an algebraically closed field of prime characteristic $p > 2$ and $q=p^e$. Let  $S:= K[\![x_1,\dots,x_{n}]\!]$ and let $ \mathfrak{m}$ be the maximal ideal of $S$ and $f \in \mathfrak{m}^2 \setminus \{0\}$. Let $R=S/(f)$ and $R^{\bigstar}=S[\![u,v]\!]/(f+uv)$. If $R^{\bigstar}=S[\![u,v]\!]/(f+uv)$ has FFRT over $R^{\bigstar}$, then  $S/f^kS$ has  FFRT over $S/f^kS$ for every positive integer $k$.

\end{corollary}

The above corollary implies evidently the following.
\begin{corollary}

Let $K$ be an algebraically closed field of prime characteristic $p > 2$ and $q=p^e$. Let  $S:= K[\![x_1,\dots,x_{n}]\!]$ and let $ \mathfrak{m}$ be the maximal ideal of $S$ and $f \in \mathfrak{m}^2 \setminus \{0\}$. Let $R=S/(f)$ and $R^{\bigstar}=S[\![u,v]\!]/(f+uv)$. If $S/f^kS$ does not have  FFRT over $S/f^kS$ for some positive integer $k$, then $R^{\bigstar}$ does not have FFRT. In particular, if $R$ does not have  FFRT , then $R^{\bigstar}$ does not have FFRT.

\end{corollary}
An easy induction gives the following result.

\begin{corollary}
Let $K$ be an algebraically closed field of prime characteristic $p > 2$ and $q=p^e$. Let  $S:= K[\![x_1,\dots,x_{n}]\!]$ and let $ \mathfrak{m}$ be the maximal ideal of $S$,  $f \in \mathfrak{m}^2 \setminus \{0\}$ and let $R=S/(f)$. If $R$ does not have  FFRT, then the ring  $$S[\![u_1,v_1,u_2,v_2,\dots,u_t,v_t]\!]/(f+u_1v_1+u_2v_2+\dots+u_tv_t)$$ does not have FFRT for all $t\in \mathbb{N}$.
\end{corollary}


\section{A Class of rings that have FFRT but not finite CM type}
\label{Section: A Class of rings that have FFRT but not finite CM type}

We  keep the same notation as in section \ref{Section: The presentation as a cokernel of a Matrix Factorization} unless otherwise stated.  Recall that, a  local ring $(R, m)$ is said to have   finite CM representation type if there are only finitely many isomorphism classes
of indecomposable MCM R-modules. It is clear that every F-finite  local ring $(R, m)$ of prime characteristic that has finite CM representation type has also FFRT.  Smith and Van Den Bergh introduced in \cite{SV}  a class of rings that have FFRT but not finite CM representation type. However,    the main result of this section is to provide a new class of rings that have FFRT but not finite CM representation type Proposition \ref{P7.10}.

If $\alpha = (\alpha_1, \ldots , \alpha_n) \in \mathbb{Z_{+}}^n $, we write $ x^\alpha = x_1^{\alpha_1}\dots x_n^{\alpha_n}$ where $x_1,\dots,x_n$ are different variables.
\begin{lemma}\label{L4.25}
 Let  $f = x_1^{d_1}x_2^{d_2} \dots x_n^{d_n} $ be a monomial in $S$    where  $d_j \in \mathbb{Z_{+}}$ for each $j$. Let $\Gamma = \{(\alpha_1, \ldots , \alpha_n) \in \mathbb{Z_{+}}^n \, | \, 0 \leq \alpha_j\leq d_j \text{ for all  } 1\leq j \leq n \}$ , $d=(d_1,\dots,d_n)$, and let $e$ be a positive integer such that $q=p^e > \max \{ d_1 , \dots , d_n \}+1$. If $A=M_S(f,e)$, then for each $1 \leq k \leq q-1$ the matrix $A^k=M_S(f^k,e)$ is equivalent to diagonal matrix, $D$,  of size   $r_e \times r_e$ in which the diagonal entries are of the form $x^c$  where $c \in \Gamma$. Furthermore, if $c=(c_1,\dots,c_n) \in \Gamma $ and
\begin{equation*}
   \eta_k(c_j)= \begin{cases}q- | c_jq - kd_j |  \text { if } | c_jq - kd_j | < q \\
0 \text{ otherwise }
\end{cases}
\end{equation*}
 then
\begin{equation*}
  \Cok_S(A^q,A^{q-k}) = \bigoplus_{c \in \Gamma } \left[ \Cok_S(x^c, x^{d-c})\right]^{\oplus \eta_k(c)}
\end{equation*}
where $ \eta_k(c) = [K: K^q]\prod_{j=1}^n \eta_k(c_j) $ with the convention that $M^{\oplus0}=\{0\}$ for any module $M$ and $(x^c, x^{d-c})$ is the $1 \times 1$ matrix factorization of $f$.
\end{lemma}

\begin{proof}
Choose $e \in \mathbb{N}$ such that $q=p^e > \max \{ d_1 , \dots , d_n \}+1$ and let  $1\leq k \leq q-1 $. If $j=\lambda x_1^{\beta_1}\dots x_n^{\beta_n}\in \Delta_e$, we get $F_*^e(jf^k)=F_*^e(\lambda x_1^{kd_1+\beta_1}\dots x_n^{kd_n+\beta_n})$. Since $d_j , k \in \{ 0 , \dots , q-1 \} $,  there exist $ 0 \leq c_{i} \leq d_{i}  $ and $ 0 \leq u_{i}\leq q-1 $ for each $1 \leq i \leq n$ such that   $ d_i k  + \beta_{i} = c_{i}q + u_{i} $ and hence $ F_*^e(jf^k) = x_1^{c_{1}}\dots x_n^{c_{n}}F_*^e(\lambda x_1^{u_{1}}\dots x_n^{u_{n}}) $.
Therefore each  column and each  row of  $M_S(f^k,e)$ contains only one non-zero element of the form $ x_1^{c_{1}}\dots x_n^{c_{n}} $ where $  0 \leq c_{i}\leq  d  $ for all $1 \leq i \leq n$. Accordingly, using the row and column operations, the matrix $M_S(f^k,e)$ is equivalent to  a diagonal matrix, $D$,  of size   $r_e \times r_e$ in which the diagonal entries are of the form $ x_1^{c_1}\dots x_n^{c_n} $ where $  0 \leq c_{i}\leq  d  $ for all $1 \leq i \leq n$.  Now fix $c=(c_1,\dots,c_n) \in \Gamma$ and let $\eta(c)$ stand for  how many times $x^c$ appears as an element in the diagonal of $D$. It is obvious that $\eta(c)$  is exactly the same as the number of the $n$-tuples $(\alpha_1,\dots,\alpha_n)$ with $ 0 \leq \alpha_j \leq q-1 $   satisfying that
\begin{equation}\label{EEE2}
 F_*^e(\lambda x_1^{kd_1+\alpha_1}\dots x_n^{kd_n+\alpha_n}) = x_1^{c_1}\dots x_n^{c_n}F_*^e(\lambda x_1^{s_1}\dots x_n^{s_n})
\end{equation}
 where $ s_1,..,s_n \in \{0,\dots,q-1 \}$ for all $\lambda \in \Lambda_e$. However, an $n$-tuple $(\alpha_1,\dots,\alpha_n)$ with $ 0 \leq \alpha_j \leq q-1 $  will satisfy (\ref{EEE2}) if and only if  $\alpha_j= c_jq - kd_j + s_j$  for some $0 \leq s_j \leq q-1 $  whenever $1\leq j \leq n$. As a result,  the $n$-tuples $(\alpha_1,\dots,\alpha_n)\in \mathbb{Z}^n$ will satisfy (\ref{EEE2}) if and only if $|c_jq - kd_j|  < q$  for all $1\leq j \leq n$. Indeed, for $1\leq j \leq n$, set $u_j= c_jq - kd_j$. If $ 0 \leq u_j  < q$, we can choose $\alpha_j \in \{u_j, u_j+1,\dots,q-1 \}$. On the other hand, if    $ -q < u_j  < 0$, then $\alpha_j$ can be taken from $\{q-1+u_j, q-2+u_j,\dots,-u_j+u_j\}$. Therefore, if $0 \leq |c_jq - kd_j|  < q$, then $\alpha_j$ can be chosen by $q-|c_jq - kd_j|$ ways.
Set
\begin{equation*}
   \eta_k(c_j)= \begin{cases}q- |c_jq - kd_j | \text { if } | c_jq - kd_j | < q \\
0 \text{ otherwise }
\end{cases}
\end{equation*}
 Thus we get that $ \eta_k(c) = [K: K^q]\prod_{j=1}^n \eta_k(c_j) $. Now if $\tilde{\Gamma}= \{ c \in \Gamma \, | \, \eta_k(c)> 0 \}$, it follows that
\begin{equation*}
 \Cok_S(A^q,A^{q-k}) = \bigoplus_{c \in \tilde{\Gamma} } \left[ \Cok_S(x^c, x^{d-c})\right]^{\oplus \eta_k(c)}= \bigoplus_{c \in \Gamma } \left[ \Cok_S(x^c, x^{d-c})\right]^{\oplus \eta_k(c)}.
\end{equation*}

\end{proof}

\begin{corollary}\label{C32}
Let $K$ be an algebraically closed   field of prime characteristic $p > 2$ and $q=p^e$. Let  $S:= K[\![x_1,\dots,x_{n}]\!]$ and $f =x_1^{d_1}x_2^{d_2} \dots x_n^{d_n} $ where  $d_j \in \mathbb{N}$ for each $j$ . Then $R^{\bigstar}=S[\![u,v]\!]/(f+uv)$ has FFRT over $R^{\bigstar}$. Furthermore, for every $e \in \mathbf{N}$ with $q=p^e > max \{ d_1 , \dots , d_n \}+1$, $F_*^e(R^{\bigstar})$ has the following decomposition:
\begin{equation*}
 F_*^e(R^{\bigstar})= (R^{\bigstar})^{q^n} \bigoplus \bigoplus_{k=1}^{q-1}\left[\bigoplus_{c \in \Gamma } \left[\Cok_{S[\![u,v]\!]}(x^c,x^{d-c})^{\maltese}\right]^{\oplus \eta_k(c)} \right]
\end{equation*}
where $\eta_k(c)$ and $\Gamma$ as in the above lemma.
\end{corollary}
\begin{proof}
Let $e \in \mathbb{N}$ with $q=p^e > max \{ d_1 , \dots , d_n \}+1$ and let  $1 \leq k \leq q-1 $. Let $\Gamma $ and $ \eta_k(c)$ be as in the above lemma. If $A=M_S(f,e)$,   it follows that
\begin{equation*}
  F_*^e(S/f^k) \cong \Cok_S(A^k,A^{q-k}) \cong \bigoplus_{c \in \Gamma } \left[ \Cok_S(x^c, x^{d-c})\right]^{\oplus \eta_k(c)}.
\end{equation*}
 If $ \mathfrak{M}= \{\Cok_S(x^c, x^{d-c})\, | \, c \in \Gamma \}\cup \{ F^j(S/f^i)\, |\, p^j \leq \max \{ d_1 , \dots , d_n \}  \text{ and }   0 \leq i \leq p^j\} $,
then $F_*^e(S/(f^k))$  is  a direct sum with direct summands taken from the finite set $\mathfrak{M}$ for every $e\in \mathbb{N}$ and  $1 \leq k<p^e $. By Proposition \ref{P30}  $R^{\bigstar}$ has FFRT.

Furthermore, we can describe explicitly the direct summands of $F_*^e(R^{\bigstar})$. Indeed, if $\hat{\Gamma}:= \{ c \in \Gamma \, | \, \eta_k(c)> 0 \text{ and } c \notin \{d,0\} \} $, it follows that  $$(A^k,A^{q-k}) \sim \bigoplus_{c \in \hat{\Gamma} } (x^c,x^{d-c})^{\oplus \eta_k(c)}\bigoplus (x^d,1)^{\oplus \eta_k(d)}\bigoplus (1,x^d)^{\oplus \eta_k(0)} .$$
Recall  by Remark \ref{R3.8} that  $( A^k, A^{q-k} )^{\maltese}$ is a matrix factorization of $f+uv$ and
$$(A^k,A^{q-k})^{\maltese} \sim \bigoplus_{c \in \hat{\Gamma} } \left[(x^c,x^{d-c})^{\maltese}\right]^{\oplus \eta_k(c)}\bigoplus \left[(x^d,1)^{\maltese}\right]^{\oplus \eta_k(d)}\bigoplus \left[(1,x^d)^{\maltese}\right]^{\oplus \eta_k(0)}. $$
Therefore
\begin{eqnarray*}
 \Cok_{S[\![u,v]\!]}(A^k,A^{q-k})^{\maltese}   &=& \bigoplus_{c \in \hat{\Gamma} } \left[\Cok_{S[\![u,v]\!]}(x^c,x^{d-c})^{\maltese}\right]^{\oplus \eta_k(c)}\bigoplus \left[\Cok_{S[\![u,v]\!]}(x^d,1)^{\maltese}\right]^{\oplus \eta_k(d)}  \\
& & \bigoplus \left[\Cok_{S[\![u,v]\!]}(1,x^d)^{\maltese}\right]^{\oplus \eta_k(0)}.
\end{eqnarray*}

By  Proposition  \ref{P21},  the above equation, and the convention that $M^{\oplus 0}=\{0\}$, we can write
\begin{equation*}
 F_*^e(S[\![u,v]\!]/(f+uv)) = (R^{\bigstar})^{q^n} \bigoplus \bigoplus_{k=1}^{q-1}\left[\bigoplus_{c \in \Gamma } \left[\Cok_{S[\![u,v]\!]}(x^c,x^{d-c})^{\maltese}\right]^{\oplus \eta_k(c)} \right].
\end{equation*}

\end{proof}

We will take benefit from the proof of Corollary \ref{C32} above when we compute the $F$-signature in section 9.
The fact that the hypersurface in Corollary \ref{C32} has FFRT can be also proved differently in  the following proof.

\begin{proof}
If $f =x_1^{d_1}x_2^{d_2} \dots x_n^{d_n} $ where  $d_j \in \mathbb{N}$ for each $j$, notice that $f+uv$ is an irreducible polynomial in the ring $K[x_1,\cdots,x_n,u,v]$ and consequently the ideal $(f+uv)$  is a toric ideal and hence $K[x_1,\cdots,x_n,u,v]/(f+uv)$ is a affine toric ring (For more details on affine toric ideals and  affine toric rings see \cite[Section 2]{ES}, \cite[Section 1.2]{CLS}, and \cite[Section 2]{GHP}). Since an affine toric ring is a direct summand of a polynomial ring  \cite{Hoc}, it follows from \cite[Proposition 3.1.6]{SV} that the ring  $K[x_1,\cdots,x_n,u,v]/(f+uv)$ has FFRT and consequently we obtain that  $K[\![x_1,\cdots,x_n,u,v]\!]/(f+uv)$ has FFRT.
\end{proof}

\bigskip
If  $(S,\mathfrak{n})$ is a regular local ring, and  $R = S/( g )$, where
$0 \neq g \in \mathfrak{n}^2$, then  R is a simple  singularity (relative to the presentation $R = S/( g )$) provided there are only finitely
 many ideals $L$ of $S$ such that $g \in  L^2$ \cite[Definition 9.1]{CMR}.

\begin{proposition}\label{P7.10}
Let $K$ be an algebraically closed field with $char(k) >2$,
 and let $S = K[\![x_1, \dots, x_k]\!]$ where $k > 2$.
 If  $ f \in S$ is a monomial of degree greater than $3$ and  $R^{\bigstar}=S[\![u,v]\!]/(f+uv)$,  then $R^{\bigstar}$ has FFRT but it does not have finite CM representation type.
\end{proposition}
\begin{proof}
Let $t$ be the degree of the monomial $f$ and let $\mathfrak{m}$ be the maximal ideal of $S$. Clearly, $t$ is the largest natural number satisfying $ f \in \mathfrak{m}^t - \mathfrak{m}^{t+1}$ and consequently the multiplicity $e(R)$ of the ring $R$ is  $e(R)=t$ (by \cite[Corollary  A.24 page 435]{CMR} ). Since $e(R)=t > 3$ ,  $R$ is not a simple singularity   \cite[ Lemma  9.3]{CMR} . Therefore, by  \cite[Theorem 9.2]{CMR} $R$ does not have finite CM type. Consequently,  by Proposition \ref{P28}, $R^{\bigstar}$ does not have finite CM type as well. However,  Corollary \ref{C32} implies that  $R^{\bigstar}$ has FFRT.
\end{proof}


\section{The F-signature of $ \frac{S[[u,v]]}{f+uv}$ when $f$ is a monomial}
\label{Section: The F-signature of f+uv when f is a monomial}

We will keep the same notation as in notation \ref{N4.1} unless otherwise stated. \\

\begin{notation}\label{N1}
Let $\Delta = \{1,\dots,n \} $ and let  $d, d_1,\dots,d_n$ be real numbers.  For every $1\leq s \leq n-1$, define
\begin{equation*}
  W^{(n)}_s = \sum_{j_1,\dots,j_s \in \Delta } [(d-d_{j_1})\dots(d- d_{j_s})(\prod_{j \in \Delta \setminus \{j_1,\dots,j_s\}}d_j)]
\end{equation*}
\begin{equation*}
W^{(n)}_n= \prod_{i=1}^{n}(d-d_{i}) \text{  and  }  W^{(n)}_0= \prod_{i=1}^{n}d_{i}.
\end{equation*}

\end{notation}

According to the above notation,  we can observe the following remark
\begin{remark}\label{R5.3}
If $n\geq 2$, then  $W^{(n)}_j= (d-d_n)W^{(n-1)}_{j-1} + d_nW^{(n-1)}_{j}$ for every $ 1 \leq j \leq n-1 $.
\end{remark}

 The following lemma is needed to prove Proposition \ref{P8.4}
 \begin{lemma}\label{L5.4}
  If $r$ , $q$, $d_j$ and $u_j$ are real numbers for all $1 \leq j \leq n$, then

  \begin{equation*}
    \prod_{j=1}^{n}(d_jr + \frac{q(d-d_j)}{d}+ u_j) = \sum_{j=0}^{n} \frac{q^j}{d^j}W^{(n)}_jr^{n-j}+ \sum_{c=0}^{n-1}g^{(n)}_c(q)r^{c}
  \end{equation*}

 where $g^{(n)}_c(q)$ is a polynomial in $q$ of degree $n-1-c$ for all $0\leq c \leq n-1$.
 \end{lemma}

 \begin{proof}
 By induction on $n$, we will prove this lemma. It is clear when $n=1$. The induction hypothesis implies that

 \begin{equation*}
  \prod_{j=1}^{n+1}(d_jr + \frac{q(d-d_j)}{d}+ u_j)= A + B + C
 \end{equation*}
 where
 \begin{eqnarray*}
   A  &=&  d_{n+1}r \prod_{j=1}^{n}(d_jr + \frac{q(d-d_j)}{d}+ u_j)\\
      &=&  \sum_{j=0}^{n}d_{n+1} \frac{q^j}{d^j}W^{(n)}_jr^{n-j+1}+\sum_{c=0}^{n-1}d_{n+1}g^{(n)}_c(q)r^{c+1}\\
 B  &=& \frac{q(d-d_{n+1})}{d} \prod_{j=1}^{n}(d_jr + \frac{q(d-d_j)}{d}+ u_j)\\
      &=& \sum_{j=0}^{n}(d-d_{n+1}) \frac{q^{j+1}}{d^{j+1}}W^{(n)}_jr^{n-j} +\sum_{c=0}^{n-1}\frac{q(d-d_{n+1})}{d}g^{(n)}_c(q)r^{c}
 \end{eqnarray*}

 \begin{eqnarray*}
   C &=& u_{n+1}\prod_{j=1}^{n}(d_jr + \frac{q(d-d_j)}{d}+ u_j)  \\
     &=&  \sum_{j=0}^{n} u_{n+1}\frac{q^j}{d^j}W^{(n)}_jr^{n-j}+\sum_{c=0}^{n-1}u_{n+1}g^{(n)}_c(q)r^{c}.
 \end{eqnarray*}
If $D=\sum_{j=0}^{n}d_{n+1} \frac{q^j}{d^j}W^{(n)}_jr^{n-j+1}$ and $E=\sum_{j=0}^{n}(d-d_{n+1}) \frac{q^{j+1}}{d^{j+1}}W^{(n)}_jr^{n-j}$, it follows that
 \begin{eqnarray*}
  D+E &=& d_{n+1}W^{(n)}_0r^{n+1}+ \sum_{j=1}^{n}d_{n+1} \frac{q^j}{d^j}W^{(n)}_jr^{n-j+1}\\
   & & +\sum_{j=0}^{n-1}(d-d_{n+1}) \frac{q^{j+1}}{d^{j+1}}W^{(n)}_jr^{n-j}+ (d-d_{n+1}) \frac{q^{n+1}}{d^{n+1}}W^{(n)}_n \\
   &=& d_{n+1}W^{(n)}_0r^{n+1}+ \sum_{j=1}^{n}d_{n+1} \frac{q^j}{d^j}W^{(n)}_jr^{n-j+1}\\
   & & +\sum_{j=1}^{n}(d-d_{n+1}) \frac{q^{j}}{d^{j}}W^{(n)}_{j-1}r^{n-j+1}+ (d-d_{n+1}) \frac{q^{n+1}}{d^{n+1}}W^{(n)}_n \\
   &=& d_{n+1}W^{(n)}_0r^{n+1}+ \sum_{j=1}^{n}\frac{q^j}{d^j}[d_{n+1} W^{(n)}_j+(d-d_{n+1}) W^{(n)}_{j-1}]r^{n-j+1} \\
   & & +(d-d_{n+1}) \frac{q^{n+1}}{d^{n+1}}W^{(n)}_n.
\end{eqnarray*}

 Now apply Remark \ref{R5.3} to get that
\begin{equation}\label{E.8.14}
D+E= \sum_{j=0}^{n+1} \frac{q^j}{d^j}W^{(n+1)}_jr^{n+1-j}.
\end{equation}

Now define
 $$g^{(n+1)}_n(q)=d_{n+1}g^{(n)}_{n-1}(q)+u_{n+1}W^{(n)}_0,$$
$$g^{(n+1)}_0(q)=\frac{q}{d}(d-d_{n+1})g^{(n)}_0(q)+u_{n+1}\frac{q^n}{d^n}W^{(n)}_n+u_{n+1}g^{(n)}_0(q),$$
and
\begin{equation*}
 g^{(n+1)}_j(q)= d_{n+1}g^{(n)}_{j-1}(q)+ \frac{q(d-d_{n+1})}{d}g^{(n)}_{j}(q)+ u_{n+1}\frac{q^{n-j}}{d^{n-j}}W^{(n)}_{n-j}+u_{n+1}g^{(n)}_{j}(q)
\end{equation*}
 for every $1\leq j \leq n-1 $.

 Notice that $g^{(n+1)}_j(q)$ is a polynomial in $q$ of degree $n-j$ for all $0\leq j \leq n $ because $g^{(n)}_c(q)$ is a polynomial in $q$ of degree $n-1-c$ for all $0\leq c \leq n-1$.

It is easy to verify that
 \begin{equation*}
   \sum_{c=0}^{n-1}d_{n+1}g^{(n)}_c(q)r^{c+1}+ \sum_{c=0}^{n-1}\frac{q(d-d_{n+1})}{d}g^{(n)}_c(q)r^{c} + C = \sum_{c=0}^{n}g^{(n+1)}_c(q)r^{c},
 \end{equation*}
 and hence
 \begin{eqnarray*}
   \prod_{j=1}^{n+1}(d_jr + \frac{q(d-d_j)}{d}+ u_j) &=& A+B+C = E+D+ \sum_{c=0}^{n}g^{(n+1)}_c(q)r^{c} \\
   &=&   \sum_{j=0}^{n+1} \frac{q^j}{d^j}W^{(n+1)}_jr^{n+1-j}+ \sum_{c=0}^{n}g^{(n+1)}_c(q)r^{c}.
 \end{eqnarray*}

 \end{proof}






\begin{proposition}\label{P8.4}
 Let  $f= x_1^{d_1}\dots x_n^{d_n}$ be a monomial in $S=K[\![x_1,\dots,x_n]\!]$ where $d_j  $ is a positive  integer for each $1\leq j \leq n $. If $d = \max \{d_1,\dots, d_n \}$ and $R^{\bigstar}=S[\![u,v]\!]/(f+uv)$, then  the F-signature of $R^{\bigstar}$ is given by  \\
 \begin{equation}
 \mathbb{S}(R^{\bigstar})=\frac{2}{d^{n+1}}\left[\frac{d_1d_2\dots d_n}{n+1} + \frac{W^{(n)}_1}{n}+\dots+\frac{W^{(n)}_s}{n-s+1}+\dots+\frac{W^{(n)}_{n-1}}{2}\right]
\end{equation}
 where $W^{(n)}_1,\dots,W^{(n)}_{n-1}$ are defined as in the notation \ref{N1}. Therefore, $\mathbb{S}(R^{\bigstar})$ is positive.


\end{proposition}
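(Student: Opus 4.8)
The plan is to compute the $F$-signature directly from the $F$-signature formula, using the explicit decomposition of $F_*^e(R^{\bigstar})$ already obtained in Corollary~\ref{C32}. By definition, $\mathbb{S}(R^{\bigstar}) = \lim_{e\to\infty} \sharp(F_*^e(R^{\bigstar}),R^{\bigstar})/q^{e(d^{\bigstar})}$ where $d^{\bigstar} = \dim R^{\bigstar} = n+1$ and $\alpha(R^{\bigstar})$ records the possible imperfection of $K$; since $R^{\bigstar}$ is a hypersurface in $n+2$ variables over $K$, we have $d^{\bigstar}+\alpha(R^{\bigstar}) = n+1+\alpha(K)$ and the denominator is $q^{n+1}[K:K^p]^e$. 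First I would invoke Corollary~\ref{C4.19}: $\sharp(F_*^e(R^{\bigstar}),R^{\bigstar}) = r_e + 2\sum_{k=1}^{q-1}\sharp(\Cok_S(A^k),R)$, where $A = M_S(f,e)$ and $r_e = [K:K^p]^e q^n$. Since $r_e/(q^{n+1}[K:K^p]^e) = 1/q \to 0$, the entire contribution comes from the sum $2\sum_{k=1}^{q-1}\sharp(\Cok_S(A^k),R)$.

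Next I would use Lemma~\ref{L4.25} to evaluate $\sharp(\Cok_S(A^k),R)$. For $q = p^e > \max\{d_1,\dots,d_n\}+1$, the lemma gives $\Cok_S(A^k,A^{q-k}) = \bigoplus_{c\in\Gamma}[\Cok_S(x^c,x^{d-c})]^{\oplus\eta_k(c)}$, and by Corollary~\ref{C3.8}(a) the rank of the maximal free summand over $R = S/fS$ is exactly the multiplicity of the trivial factorization $(x^d,1) = (f,1)$, i.e. $\sharp(\Cok_S(A^k),R) = \eta_k(d)$ where $d = (d_1,\dots,d_n)$ is the exponent vector. From the definition of $\eta_k$, $\eta_k(d) = [K:K^q]\prod_{j=1}^n \eta_k(d_j)$ with $\eta_k(d_j) = q - |d_j q - k d_j| = q - d_j|q-k|$ when this is positive and $0$ otherwise (note $d_j(q-k) < q$ forces $k$ in a range near $q$). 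Writing $k = q - \ell$ with $\ell \geq 1$, the condition $\eta_k(d_j) > 0$ for all $j$ becomes $\ell < q/\max_j d_j = q/d$, and $\eta_k(d_j) = q - d_j\ell$. Hence $\sum_{k=1}^{q-1}\sharp(\Cok_S(A^k),R) = [K:K^q]\sum_{\ell=1}^{\lceil q/d\rceil - 1}\prod_{j=1}^n(q - d_j\ell)$.

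The main computational step is then to show
\[
\lim_{q\to\infty}\frac{2[K:K^q]}{q^{n+1}[K:K^p]^e}\sum_{\ell=1}^{\lceil q/d\rceil-1}\prod_{j=1}^n(q-d_j\ell) = \frac{2}{d^{n+1}}\left[\frac{d_1\cdots d_n}{n+1} + \frac{W^{(n)}_1}{n} + \cdots + \frac{W^{(n)}_{n-1}}{2}\right].
\]
Here $[K:K^q] = [K:K^{p^e}] = [K:K^p]^e$, so that factor cancels and I am left with a Riemann-sum computation: substituting $\ell = rq/d$ (so $r$ ranges over $(0,1)$ in steps of $d/q$), one gets $\prod_j(q - d_j\ell) = \prod_j q(1 - d_j r/d) = q^n \prod_j(d-d_jr)/d^n \cdot$ — more precisely I would expand $\prod_{j=1}^n(q - d_j\ell)$ as a polynomial in $\ell$ and $q$ (this is where Lemma~\ref{L5.4} enters, with the roles of the variables matched to $\ell$ and $q$), identify the top-degree-in-$q$ part as $\sum_{j=0}^n q^{n-j}(-1)^j\ell^j W^{(n)}_j/(\text{coeff.})$, and apply the standard fact $\sum_{\ell=1}^{N}\ell^j \sim N^{j+1}/(j+1)$ with $N \sim q/d$. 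The term $q^{n-j}\ell^j$ summed over $\ell \leq q/d$ contributes $\sim q^{n-j}(q/d)^{j+1}/(j+1) = q^{n+1}/(d^{j+1}(j+1))$, and tracking the coefficient $W^{(n)}_j$ (the sign works out because of the alternating signs in $\prod(q-d_j\ell)$ versus $\prod(d-d_jr)$) yields the stated formula after dividing by $q^{n+1}$ and multiplying by $2$. The lower-order terms $g^{(n)}_c(q)$ from Lemma~\ref{L5.4}, being of degree $\leq n-1$ in $q$, contribute $O(q^n)$ after summation and vanish in the limit. Finally, positivity of $\mathbb{S}(R^{\bigstar})$ is immediate since every term $d_1\cdots d_n, W^{(n)}_1,\dots,W^{(n)}_{n-1}$ is a sum of products of the positive quantities $d_j$ and $d - d_j \geq 0$, with the leading term $d_1\cdots d_n/(n+1)$ strictly positive.

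The hard part will be the bookkeeping in the Riemann-sum limit: matching the polynomial expansion from Lemma~\ref{L5.4} (which is stated for $\prod(d_j r + q(d-d_j)/d + u_j)$) to the product $\prod(q - d_j\ell)$ actually appearing here, getting the substitution and the signs consistent, and justifying that the ceiling function $\lceil q/d\rceil$ and the error in approximating $\sum\ell^j$ by $\int_0^{q/d} t^j\,dt$ both produce only $o(q^{n+1})$ errors. None of this is deep, but it requires care; I would isolate the power-sum asymptotics $\sum_{\ell=1}^{N}\ell^j = N^{j+1}/(j+1) + O(N^j)$ as a preliminary remark and then let the dominated-term estimate do the rest.
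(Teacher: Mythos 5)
Your proposal is correct and follows essentially the same route as the paper: reduce via Corollary \ref{C4.19} to computing $2\sum_{k}\sharp(\Cok_S(A^k),R)$, identify $\sharp(\Cok_S(A^k),R)=[K:K^p]^e\prod_j(q-d_j(q-k))$ over the range $k>q(d-1)/d$ using Lemma \ref{L4.25}, and then extract the leading coefficient in $q$ via Lemma \ref{L5.4} together with power-sum (Faulhaber) asymptotics. The only cosmetic difference is that you pass through Corollary \ref{C3.8}(a) to read off the free rank as $\eta_k(d)$, whereas the paper counts the relevant tuples directly; the final limit computation and the positivity argument are the same.
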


\begin{proof}
 Let $R=S/fS$ and  $R^{\bigstar}=S[\![u,v]\!]/(f+uv)$. Set $[K:K^p]=b$ and recall from the notation \ref{N4.1} that $\Lambda_e$ is the basis of $K$ as $K^q$-vector space where $q=p^e$.  We know from Corollary \ref{C4.19}  that
 \begin{equation}\label{E1}
  \sharp ( F_*^e(R^{\bigstar}),R^{\bigstar})= r_e + 2 \sum_{k=1}^{q-1} \sharp ( \Cok_S(A^k),R)
 \end{equation}
 where $r_e=b^eq^n$ ,  $A=M_S(f,e)$ and $A^k=M_S(f^k,e)$. Since $f^k$ is a monomial, it follows from Lemma \ref{L4.25} that  the matrix $A^k=M_S(f^k,e)$ is equivalent to a diagonal matrix $D$ whose diagonal entries are taken from the set $\{x_1^{u_1}\dots x_n^{u_n} |0\leq u_j\leq d_j  \text{  for   all } 1\leq j \leq n  \} $.
This makes $\Cok_S(A^k)= \Cok_S(D)$ and consequently the number $\sharp ( \Cok_S(A^k),R) $ is exactly the same as the number of the $n$-tuples $(\alpha_1,\dots,\alpha_n)$ with $0 \leq \alpha_j \leq q-1 $  satisfying that
\begin{equation}\label{EE2}
 F_*^e(\lambda x_1^{kd_1+\alpha_1}\dots x_n^{kd_n+\alpha_n}) = x_1^{d_1}\dots x_n^{d_n}F_*^e(\lambda x_1^{s_1}\dots x_n^{s_n})
\end{equation}
  where $ s_1,..,s_n \in \{0,\dots,q-1 \}$ for all $\lambda \in \Lambda_e$. However, an $n$-tuple $(\alpha_1,\dots,\alpha_n)$ with $ 0\leq \alpha_j \leq q-1 $ will satisfy  (\ref{EE2}) if and only if  $\alpha_j= d_j(q-k) + s_j$  for some $ 0\leq s_j \leq q-1 $  for each $ 1 \leq j \leq n $. As a result,  the $n$-tuples $(\alpha_1,\dots,\alpha_n)\in \mathbb{Z}^n$ will satisfy  (\ref{EE2}) if and only if $d_j(q-k)\leq \alpha_j < q$  for all $ 1 \leq j \leq n $.
Set  $ N_j(k):= \{ \alpha_j \in \mathbb{Z} \, | \, d_j(q-k)\leq \alpha_j < q \}$ for all $ 1 \leq j \leq n $.  Therefore,
\begin{equation}\label{E18}
  \sharp ( \Cok_S(A^k),R)= b^e|N_1(k)||N_2(k)|\dots|N_n(k)|.
\end{equation}
Since  $|N_j(k)|= q-d_jq+d_jk$  for all $ 1 \leq j \leq n $, it follows that
\begin{equation}\label{EE18}
  \sharp ( \Cok_S(A^k),R) = b^e\prod_{j=1}^{n}(q-d_jq+d_jk).
\end{equation}

Let  $d = \max \{d_1,\dots, d_n \}$, then $\sharp ( \Cok_S(A^k),R)\neq 0 $ if and only if $ N_j(k)\neq 0$ for all $ 1 \leq j \leq n $ if and only if $\frac{q(d-1)}{d} < k$.
Let $q=du+t$ where $t\in \{0,..,d-1 \}$. If $t \neq 0$, then one can easily verify that
\begin{equation}\label{EE19}
 \frac{q(d-1)}{d} < q-\frac{q-t}{d} < \frac{q(d-1)}{d}+1.
\end{equation}
Therefore, \\ $\sharp ( \Cok_S(A^k),R)\neq 0$ if and only if $ k \in \{q-\frac{q-t}{d}+r \, | \, r \in \{0,\dots,\frac{q-t}{d}-1\} \}.$

However, if $t=0$,  it follows that $\frac{q(d-1)}{d}= q- \frac{q}{d}\in \mathbb{Z}$ and consequently $\sharp ( \Cok_S(A^k),R)\neq 0$ if and only if $k \in \{q-\frac{q}{d}+r \, | \, r \in \{1,\dots,\frac{q}{d}-1\} \}.$

First, assume  that  $t\neq 0$. This implies that
 \begin{eqnarray*}
   \sum_{k=1}^{q-1} \sharp ( \Cok_S(A^k),R) &=& b^e \sum_{k=q-\frac{q-t}{d}}^{q-1}\prod_{j=1}^{n}(q-d_jq+d_jk)\\
   &=& b^e \sum_{r=0}^{\frac{q-t}{d}-1}\prod_{j=1}^{n}(q-d_jq+d_j(r+q-\frac{q-t}{d})) \\
   &=& b^e\sum_{r=0}^{\frac{q-t}{d}-1}\prod_{j=1}^{n}(d_jr + \frac{q(d-d_j)}{d}+ \frac{d_jt}{d}).\\
 \end{eqnarray*}




Recall by Lemma \ref{L5.4} that

\begin{equation}\label{E61}
 \prod_{j=1}^{n}(d_jr + \frac{q(d-d_j)}{d}+ \frac{d_jt}{d})= \sum_{j=0}^{n} \frac{q^j}{d^j}W^{(n)}_jr^{n-j} +\sum_{c=0}^{n-1}g^{(n)}_c(q)r^{c}
\end{equation}

where  $g^{(n)}_c(q)$ is a polynomial in $q$ of degree $n-1-c$ for all $0 \leq c \leq n-1$.
Set $ \delta =\frac{q-t}{d}-1$.  By Faulhaber's formula \cite{JR}, if $s$ is a positive integer, we get the following polynomial in $\delta$ of degree $s+1$
\begin{equation}\label{E4}
 \sum_{r=1}^{\delta} r^s = \frac{1}{s+1}\sum_{j=0}^s(-1)^j\binom{s+1}{j}B_j\delta^{s+1-j}
\end{equation}
where $B_j$ are  Bernoulli numbers, $B_0=1$ and  $B_1= \frac{-1}{2}$. This makes
\begin{equation}\label{E62}
 \sum_{r=0}^{\delta} r^s=\frac{q^{s+1} }{(s+1)d^{s+1}}+V_s(q)
\end{equation}
where $V_s(q)$ is a polynomial of degree $s$ in $q$.
 From Faulhaber's formula and the  equations (\ref{E61}), and  (\ref{E62}),   we get that
 \begin{eqnarray*}
   \sum_{k=1}^{q-1} \sharp ( \Cok_S(A^k),R) &=& b^e\sum_{r=0}^{\delta}[\sum_{j=0}^{n} \frac{q^j}{d^j}W^{(n)}_jr^{n-j} +\sum_{c=0}^{n-1}g^{(n)}_c(q)r^{c}] \\
    &=& b^e[\sum_{j=0}^{n} \frac{q^j}{d^j}W^{(n)}_j\sum_{r=0}^{\delta}r^{n-j} +\sum_{c=0}^{n-1}g^{(n)}_c(q)\sum_{r=0}^{\delta}r^{c}]\\
    &=&b^e[\sum_{j=0}^{n} \frac{q^j}{d^j}W^{(n)}_j(\frac{q^{n-j+1} }{(n-j+1)d^{n-j+1}}+V_{n-j}(q)) \\
     & & + \sum_{c=0}^{n-1}g^{(n)}_c(q)(\frac{q^{c+1} }{(c+1)d^{c+1}}+V_c(q))]  \\
     &=& \frac{b^eq^{n+1}}{d^{n+1}}\sum_{j=0}^{n}\frac{W^{(n)}_j}{n-j+1}+ b^e[\sum_{j=0}^{n}\frac{q^j}{d^j}W^{(n)}_jV_{n-j}(q) \\
     & & +\sum_{c=0}^{n-1}g^{(n)}_c(q)\frac{q^{c+1} }{(c+1)d^{c+1}}+\sum_{c=0}^{n-1}g^{(n)}_c(q)V_c(q)].
 \end{eqnarray*}
Since $ \sum_{j=0}^{n}\frac{q^j}{d^j}W^{(n)}_jV_{n-j}(q)$ and $\sum_{c=0}^{n-1}g^{(n)}_c(q)\frac{q^{c+1} }{(c+1)d^{c+1}}+\sum_{c=0}^{n-1}g^{(n)}_c(q)V_c(q)$ are polynomials in $q=p^e$ of degree $n$ and $n-1$ respectively, it follows that

$$ \lim_{e\rightarrow \infty }\frac{1}{b^ep^{e(n+1)}}b^e[\sum_{j=0}^{n}V_{n-j}(q)+\sum_{c=0}^{n-1}g^{(n)}_c(q)\frac{q^{c+1} }{(c+1)d^{c+1}}+\sum_{c=0}^{n-1}g^{(n)}_c(q)V_c(q)]=0.$$
Therefore
\begin{eqnarray*}
\lim_{e\rightarrow \infty }\frac{1}{b^ep^{e(n+1)}}\sum_{k=1}^{q-1} \sharp ( \Cok_S(A^k),R) &=& \frac{1}{d^{n+1}}\sum_{j=0}^{n}\frac{W^{(n)}_j}{n-j+1}.
\end{eqnarray*}
By equation (\ref{E1}) and the above equation we conclude that the F-signature of the ring $R^{\bigstar}$ in the case that $t\neq 0$   is given by
\begin{equation}\label{E64}
 \mathbb{S}(R^{\bigstar})=\frac{2}{d^{n+1}}\Big[\frac{d_1d_2\dots d_n}{n+1} + \frac{W^{(n)}_1}{n}+\dots+\frac{W^{(n)}_s}{n-s+1}+\dots+\frac{W^{(n)}_{n-1}}{2} \Big].
\end{equation}

Second, assume that $t=0$ and hence   $q=du $.  Therefore,   $\frac{q(d-1)}{d}= q- \frac{q}{d}\in \mathbb{Z}$ and consequently
\begin{equation*}
\sharp ( \Cok_S(A^k),R)\neq 0 \Leftrightarrow   k \in \{q-\frac{q}{d}+r \, | \, r \in \{1,\dots,\frac{q}{d}-1\} \}.
\end{equation*}
 Therefore
\begin{eqnarray*}
   \sum_{k=1}^{q-1} \sharp ( \Cok_S(A^k),R) &=& b^e\sum_{k=q-\frac{q}{d}+1}^{q-1}\prod_{j=1}^{n}(q-d_jq+d_jk)\\
   &=& b^e \sum_{r=1}^{\frac{q}{d}-1}\prod_{j=1}^{n}(q-d_jq+d_j(r+q-\frac{q}{d})) \\
   &=& b^e \sum_{r=1}^{\frac{q}{d}-1}\prod_{j=1}^{n}(d_jr + \frac{q(d-d_j)}{d}).\\
 \end{eqnarray*}

By an argument similar to the above argument, we conclude the same result that

\begin{equation}\label{E64}
 \mathbb{S}(R^{\bigstar})=\frac{2}{d^{n+1}}\Big[\frac{d_1d_2\dots d_n}{n+1} + \frac{W^{(n)}_1}{n}+\dots+\frac{W^{(n)}_s}{n-s+1}+\dots+\frac{W^{(n)}_{n-1}}{2}\Big].
\end{equation}
\end{proof}

\section{The F-signature of $\frac{S[\![z]\!]}{(f+z^2)}$ when $f$ is a monomial}
\label{Section: The F-signature of f+z^2 when f is a monomial}

We will keep the same notation as in notation \ref{N4.1} unless otherwise stated. \\

\begin{proposition}
 Let $f= x_1^{d_1}\dots x_n^{d_n}$ be a monomial in $S=K[\![x_1,\dots,x_n]\!]$ where $d_j $ is a positive  integer for each $1 \leq j \leq n$ and $K$ is a field of prime characteristic $p>2$ with $[K:K^p] < \infty $. Let $R=S/fS$ and $R^{\sharp}=S[\![z]\!]/(f+z^2)$.  It follows that:  \\
1) If $ d_j=1 $ for each $1 \leq j \leq n$, then $ \mathbb{S}(S[\![z]\!]/(f+z^2))= \frac{1}{2^{n-1}}$. \\
2) If $d = \max \{d_1,\dots, d_n \}\geq 2$, then $ \mathbb{S}(S[\![z]\!]/(f+z^2))= 0$.
\end{proposition}

\begin{proof}
Set $[K:K^p]=b$ and recall from the notation \ref{N4.1} that $\Lambda_e$ is the basis of $K$ as $K^q$-vector space. We know  by  lemma \ref{L4.13} that
\begin{equation*}
 F_*^e(R^{\sharp})= \Cok_{S[\![z]\!]}\left[
                                              \begin{array}{cc}
                                                A^{\frac{q-1}{2}} & -zI \\
                                                zI &  A^{\frac{q+1}{2}} \\
                                              \end{array}
                                            \right]  \text{  where } A=M_S(f,e).
\end{equation*}
 It follows from Corollary \ref{C3.8} that
 \begin{equation}\label{E12}
  \sharp(F_*^e(R^{\sharp}),R^{\sharp})= \sharp(\Cok_S(A^{\frac{q-1}{2}}),R) + \sharp(\Cok_S(A^{\frac{q+1}{2}}),R).
 \end{equation}
Let $k\in \{\frac{q-1}{2}, \frac{q+1}{2}\}$ and
set  $ N_j(k):= \{ \alpha_j \in \mathbb{Z} \, | \, d_j(q-k)\leq \alpha_j < q \}$ for all $1\leq j \leq n $. Using the same argument that was previously used in the proof of Proposition \ref{P8.4}, it follows that
\begin{equation}\label{EE18}
  \sharp ( \Cok_S(A^k),R)= b^e|N_1(k)||N_2(k)|\dots|N_n(k)|=b^e\prod_{j=1}^{n}(q-d_jq+d_jk).
\end{equation}
Now if $d_1=d_2=\dots=d_n=1$, it follows from equation  (\ref{EE18}) that $ \sharp ( \Cok_S(A^k),R)= b^ek^n$ for $k\in \{\frac{q-1}{2}, \frac{q+1}{2}\}.$  Therefore,  the equation (\ref{E12}) implies that
\begin{equation}\label{EE15}
\sharp(F_*^e(R^{\sharp}),R^{\sharp})=b^e[ ( \frac{q-1}{2})^n + ( \frac{q+1}{2})^n]
\end{equation}
and consequently
\begin{equation*}
  \mathbb{S}(R^{\sharp})= \lim_{e\rightarrow \infty } \sharp(F_*^e(R^{\sharp}),R^{\sharp})/b^ep^{en} = \frac{1}{2^{n-1}}.
\end{equation*}
Now let  $d_i= \max \{d_1,\dots,d_n \}$  for some $1\leq i \leq n $.
First assume that $d_i=2 $.  If $k=\frac{q-1}{2}$ , it follows that $d_i(q-k) > q $ and consequently $|N_i(k)|=0 .$ The equation (\ref{EE18}) implies $\sharp ( \Cok_S(A^k),R)=0$. When $k=\frac{q+1}{2}$ , we get that  $d_i(q-k)=q-1$ and consequently $N_i(k)= \{q-1 \} $ which makes $|N_i(k)|=1 $. Notice that when $k=\frac{q+1}{2}$ and $d_j=1$ , it follows that $|N_j(k)|= \frac{q+1}{2} $. As a result, if $k=\frac{q+1}{2}$, we conclude that
\begin{equation*}
 \sharp ( \Cok_S(A^k),R)=b^e |N_1(k)||N_2(k)|\dots|N_n(k)| \leq b^e (\frac{q+1}{2})^{n-1}.
\end{equation*}
Therefore,

\begin{equation*}
  \sharp(F_*^e(R^{\sharp}),R^{\sharp})= \sharp(\Cok_S(A^{\frac{q-1}{2}}),R) + \sharp(\Cok_S(A^{\frac{q+1}{2}}),R) \leq b^e(\frac{q+1}{2})^{n-1}.
\end{equation*}

As a result,

\begin{equation*}
  \mathbb{S}(R^{\sharp})= \lim_{e\rightarrow \infty } \sharp(F_*^e(R^{\sharp}),R^{\sharp})/b^ep^{en} = 0.
\end{equation*}

Second  assume that $d_i > 2 $. In this case, for every $k\in \{\frac{q-1}{2}, \frac{q+1}{2}\}$, it follows that $d_i(q-k)> q$ and consequently $|N_i(k)|= 0 $. Therefore

\begin{equation*}
  \sharp(F_*^e(R^{\sharp}),R^{\sharp})= \sharp(\Cok_S(A^{\frac{q-1}{2}}),R) + \sharp(\Cok_S(A^{\frac{q+1}{2}}),R) = 0
\end{equation*}
 and consequently

 \begin{equation*}
  \mathbb{S}(R^{\sharp})= \lim_{e\rightarrow \infty } \sharp(F_*^e(R^{\sharp}),R^{\sharp})/b^ep^{en} = 0.
\end{equation*}

Notice that when $d = \max \{d_1,\dots, d_n \} > 2$, we can prove that $\mathbb{S}(R^{\sharp})=0$ using Fedder's Criterion \cite[Proposition 1.7]{RF}. Indeed, let $\mathfrak{m}$ be the maximal ideal of $S[\![z]\!]$ and let $R^{\sharp}=S[\![z]\!]/(f+z^2)$. If $d = \max \{d_1,\dots, d_n \} > 2$, then $(f+z^2)^{q-1}\in \mathfrak{m}^{[q]}$ which makes, by Fedder's Criterion,   $\sharp(F_*^e(R^{\sharp}),R^{\sharp})=0$ for all $e \in \mathbb{Z}^{+}$. This means clearly that
\begin{equation*}
 \mathbb{S}(R^{\sharp})= \lim_{e\rightarrow \infty } \sharp(F_*^e(R^{\sharp}),R^{\sharp})/b^ep^{en} = 0.
\end{equation*}

\end{proof}

\begin{center}
\textbf{Acknowledgment }
\end{center}

We would like  to thank the referee for carefully reading our manuscript and giving thoughtful comments and efforts  towards improving our manuscript.  We wish to thank also  Holger Brenner and Tom Bridgeland for their useful comments on the results in this paper.



\end{document}